\theoremstyle{plain}
 \newtheorem{theorem}{Theorem}
 \newtheorem{prop}{Proposition}
 \newtheorem{lem}{Lemma}
 \newtheorem{cor}{Corollary}
\theoremstyle{definition}
 \newtheorem{remark}{Remark}
 \newtheorem{definition}{Definition}
\newcommand{\mX}{{\mathcal{X}}}
\newcommand{\mD}{{\mathcal{D}}}
\newcommand{\mU}{{\mathcal{U}}}
\newcommand{\mL}{{\mathcal{L}}}
\newcommand{\mW}{{\mathcal{W}}}
\newcommand{\mV}{{\mathcal{V}}}
\newcommand{\sddbar}{{\sqrt{-1}\partial\bar{\partial}}}
\begin{document}

\title{Yau-Tian-Donaldson Correspondence for K-semistable Fano Manifolds}
\author{Chi Li}
\date{}
\maketitle

\begin{abstract}
In this paper, using the recent compactness results of Tian and Chen-Donaldson-Sun, we prove the K-semistable version of Yau-Tian-Donaldson correspondence for Fano manifolds.
\end{abstract}

\tableofcontents 


\section{Introduction}
The recent development in K\"{a}hler geometry is the announcement of resolution of the Yau-Tian-Donaldson's conjecture for Fano manifolds, first by Tian and independently by Chen-Donaldson-Sun (See \cite{Ti12}, \cite{CDS}).  Before stating the results, we recall some terminology. Let $X$ be a smooth Fano manifold. This means that $X$ is a compact complex manifold with an ample anti-canonical line bundle $-K_X$. In other words, the first Chern class $c_1(X)=c_1(-K_X)\in H^{1,1}(X,\mathbb{Z})$ is positive. Let $\omega_0$ be a smooth K\"{a}hler metric in $2\pi c_1(X)$. Define the space of smooth K\"{a}hler potentials
\[
\mathcal{PSH}^{sm}(\omega_0)(X)=\{\phi\in C^{\infty}(X,\mathbb{R}); \omega_\phi:=\omega+\sddbar\phi>0\}.
\]
The Ricci curvature and the scalar curvature of the K\"{a}hler metric $\omega_\phi$ can be calculated as follows:
\[
Ric(\omega_\phi)=-\sddbar\log\omega_\phi^n=:Ric(\omega_\phi^n),\quad S(\omega_\phi)=g_{\phi}^{i\bar{j}}Ric(\omega_\phi)_{i\bar{j}}.
\]
Note that to define the logarithm of any volume form $d\mu$, we implicitly choose a coordinate chart $z=\{z_i\}$ and denote $dz=dz_1\wedge\dots \wedge dz_n$, so that we can write:
\begin{equation}\label{Ricvolume}
Ric(d\mu):=-\sddbar\log d\mu=-\sddbar\log\frac{d\mu}{dz\wedge d\bar{z}}=-\sddbar\log|\partial_z|_{d\mu}^2.
\end{equation}
It's easy to verify that this is independent of coordinate charts. More intrinsically, $d\mu$ defines a Hermitian metric $|\cdot|_{d\mu}^2$ on $-K_X$ and the above $Ric(d\mu)$ is the Chern curvature of this hermitian metric. In particular $Ric(d\mu)$ is a closed $(1,1)$-form representing the cohomology class $2\pi c_1(X)$. As a consequence, the integral of the scalar curvature of $\omega_\phi \in 2\pi c_1(-K_X)$ is a topological constant:
\[
\int_X S(\omega_\phi)\omega_\phi^n=n\int_X Ric(\omega_\phi)\wedge \omega_\phi^{n-1}=n (2\pi)^n\langle c_1(X)^{n}, [X]\rangle=n V, 
\]
where we have denoted $V=(2\pi)^n c_1(X)^n$. The additive group $(\mathbb{R}, +)$ acts on $\mathcal{PSH}^{sm}(\omega_0)$ by addition. By the $\partial\bar{\partial}$-Lemma, the space of smooth K\"{a}hler metrics in $2\pi c_1(X)$ is the same as
\[
\overline{\mathcal{PSH}^{sm}}(\omega_0)=\mathcal{PSH}^{sm}(\omega_0)/\mathbb{\mathbb{R}}.
\]
The (normalized) Ricci potential $h_{\omega_0}$ of $\omega_0$ measures the deviation of $\omega_0$ from being K\"{a}hler-Einstein. It is defined by the identities:
\begin{equation}\label{ricpot}
Ric(\omega_0)-\omega_0=\sddbar h_{\omega_0},\quad \int_X e^{h_{\omega_0}}\omega_0^n=\int_X\omega_0^n.
\end{equation}
$\omega_{\phi}\in 2\pi c_1(X)$ is called K\"{a}hler-Einstein if $Ric(\omega_\phi)=\omega_{\phi}$ (In other words, $h_{\omega_{\phi}}=0$). This condition is equivalent to that $\phi$ 
satisfies a complex Monge-Amp\`{e}re equation:
\begin{equation}\label{KEeq}
Ric(\omega_\phi)=\omega_\phi \quad \Longleftrightarrow\quad  (\omega_0+\sddbar\phi)^n=e^{h_{\omega}-\phi}\omega^n.
\end{equation}
Now we define the special degeneration following Tian \cite{Ti97} (See also \cite{LX11}).
Let $\mathbb{Q}\ni\lambda>0 $ and fix $D\in |-\lambda K_X|$ to be a smooth divisor which is linearly equivalent to a positive multiple of the anti canonical divisor. 
\begin{definition}
Let $0\le\alpha<1$. 
\begin{enumerate}
\item
A special degeneration of $(X, \alpha D)$ is a $\mathbb{C}^*$-equivariant map $\pi: (\mX,\mD)\rightarrow \mathbb{C}$ satisfying
\begin{enumerate}
\item The general fibre $(\mX_t,\mD_t)\cong (X,D)$ for $t\neq 0$.
\item the central fibre $\mX_0=\pi^{-1}\{0\}$ is a $\mathbb{Q}$-Fano variety and $(\mX_0, \alpha\mD_0)$ is a klt pair.
\end{enumerate}
(For the definition of $\mathbb{Q}$-Fano varieties and klt pairs, see the classical reference in birational geometry by Koll\'{a}r-Mori \cite{KM98}.)
\item
Following Ding-Tian \cite{DT92}, we define the generalized log-Futaki invariant of the $(\mX, \alpha\mD, -K_{\mX/\mathbb{C}})$ as the log-Futaki-invariant (\cite{Do11}, see also \cite{Li11}) on the central fibre as follows:
\begin{eqnarray*}
&&Fut(\mX,\alpha\mD, -K_{\mX/\mathbb{C}})=Fut(\mX_0,\alpha\mD_0,v)\\
&=&n\int_{\mX_0} \theta_v (Ric(\omega)-\omega)\wedge\omega^{n-1}-2\pi n\alpha\left(\int_{\mD_0}\theta_v\omega^{n-1}-\lambda 
\int_{\mX_0}\theta_v\omega^n\right).
\end{eqnarray*}
$v$ is the generating holomorphic vector of the $\mathbb{C}^*$-action on the central fibre. $\omega\in 2\pi c_1(-K_{\mX_0})$ is a smooth K\"{a}hler metric. $Ric(\omega)$ is the Ricci curvature of $\omega$. $\theta_v$ is the Hamiltonian function for $v$ defined by $\iota_v\omega=\sqrt{-1}\bar{\partial}\theta_v$.  
\item
$(X,\alpha D, -K_X)$ is log-K-semistable (resp. log-K-polystable) if for any special degeneration of $(X,\alpha D)$, the log-Futaki invariant $Fut(\mX,\alpha\mD,-K_{\mX/\mathbb{C}})\ge 0$ (resp. $\ge 0$ and the equality holds if and only if $(\mX, \mD)$  is a product special degeneration, i.e. $(\mX, \mD)\cong (X\times\mathbb{C}, D\times \mathbb{C})$ with the $\mathbb{C}^*$ action induced by some $\mathbb{C}^*$ action on the pair $(X, D)$). 
\end{enumerate}
\end{definition} 
Then we have the following 
\begin{theorem}[Tian \cite{Ti12}, Chen-Donaldson-Sun \cite{CDS}]\label{TCDS}
 If $(X,-K_X)$ is K-polystable, then $X$ admits a K\"{a}hler-Einstein metric.
\end{theorem}
\begin{remark}
The reverse direction, i.e. K\"{a}hler-Einstein implying K-polystability, was proved by Tian (\cite{Ti97}) when $Aut^0(X)$ is discrete, and recently by Berman (\cite{Berm2}) in general. 
\end{remark}
To study the K\"{a}hler-Einstein equation \eqref{KEeq}, there are now two important continuity methods considered in the subject:
\begin{itemize}
\item (Aubin's continuity method) In this continuity method, we consider:
\begin{equation}\label{aubin}
Ric(\omega_{\phi_t})=t\omega_{\phi_t}+(1-t)\omega_0\Longleftrightarrow (\omega_0+\sddbar\phi_t)^n=e^{h_{\omega_0}-t\phi_t}\omega_0^n.
\end{equation}
Define the supreme value of $t$ for the solvability of the above equation as (see \cite{Ti92}, \cite{Sze})
\begin{equation}\label{R(X)}
R(X)=\sup\{t;\;\exists\; \omega\in 2\pi c_1(X) \mbox{ such that }
Ric(\omega)\ge t\omega \}.
\end{equation}

\item (Conical continuity method)
For any $\lambda\ge 1\in\mathbb{Z}$. Let $D=\{s=0\}\in |-\lambda K_X|$ be any smooth pluri-anticanonical divisor. Let $|\cdot|^2=|\cdot|_{h_0}^2$ be the induced Hermitian metric on $-\lambda K_X$ whose Chern curvature is $\lambda \omega_0$. We consider equations:
\begin{equation}\label{conical}
Ric(\omega_{\psi_t})=t\omega_{\psi_t}+2\pi(1-t)\{D\}/\lambda \Longleftrightarrow  (\omega_0+\sddbar\psi_t)^n=e^{h_{\omega_0}-t\psi}\frac{\omega_0^n}{|s|^{2(1-t)/\lambda}}.
\end{equation}
Note that the (strong) solution of this equation corresponds to a conical K\"{a}hler-Einstein metric on $(X,(1-t)D/\lambda)$ which means a K\"{a}hler-Einstein metric with cone singularities along the smooth divisor $D$ of cone angle $2\pi\beta=2\pi(1-(1-t)/\lambda)$ . Similarly as above, we define 
\[
R(X,D/\lambda)=\sup\{t; \; \exists\; \mbox{strong conical K\"{a}hler-Einstein metric on } (X,(1-t)D/\lambda)\}.
\]
Here ``strong" means the solution belongs to the space of $C^{2,\alpha,\beta}$-conical metrics introduced by Donaldson \cite{Do11}. 
\end{itemize}
The program to prove theorem \ref{TCDS} using Aubin's continuity method (\cite{Au78}) was first proposed by Tian in the early 90's following his solution of K\"{a}hler-Einstein problem for del Pezzo surfaces \cite{Ti90}. The hard core of this program is Tian's conjecture of the so called partial $C^0$-estimate (cf. \cite{Ti90}, \cite{Ti09}, \cite{Li12}). The foundational work of Cheeger-Colding-Tian \cite{CCT} is a major step towards this conjecture. Donaldson's great insight \cite{Do11}, which leads to the breakthrough, is that the conical continuity method is more adapted to the problem. People in the field then extend much of the PDE theory in the old continuity method to the conical continuity method (See in particular, \cite{Do11}, \cite{Berm} \cite{JMRL}). However,  to complete the program, one needs to resort to Tian's idea of proving partial $C^0$-estimates and extending Cheeger-Colding-Tian's theory to establish the following important compactness theorem.

\begin{theorem}[Tian \cite{Ti12}, Chen-Donaldson-Sun \cite{CDS}]\label{specialdeg}
Let $\gamma=R(X,D/\lambda)$. As $t\rightarrow \gamma$, the conical K\"{a}hler-Einstein metric $\hat{\omega}_t$ on $(X, (1-t)D/\lambda)$ Gromov-Hausdorff converges to a conical K\"{a}hler-Einstein metric $\hat{\omega}_\gamma$ on a klt pair $(\mX_0, (1-\gamma)\mD_0/\lambda)$. Moreover, there is a special degeneration $(\mathcal{X}, (1-\gamma)\mathcal{D}/\lambda, -K_{\mathcal{X}/\mathbb{C}})$ of $(X, (1-\gamma)D/\lambda, -K_X)$ with $(\mX_0, (1-\gamma)\mD_0/\lambda, -K_{\mX_0})$ being the central fibre. 
\end{theorem}

The purpose of this note is to show the following semistable version of Yau-Tian-Donaldson correspondence using the above compactness result as a tool of blackbox. For the definition of 
Ding energy and Mabuchi energy, see the next section.
\begin{theorem}\label{main}
The following conditions are equivalent:
\begin{enumerate}
\item \label{Ksemist}
$(X,-K_X)$ is K-semistable.
\item \label{R=1}
$R(X)=1$. 
\item \label{B=1}
$R(X,D/\lambda)=1$. 
\item \label{calabi} The infimum of Calabi functional is zero, that is
\[
\inf_{\omega_\phi\in 2\pi c_1(X)}\|S(\omega_\phi)-n\|_{L^2}=0.
\]
\item \label{bddKenergy}
The Ding-energy is bounded from below, or equivalently,  the Mabuchi-energy is bounded from below. 
\end{enumerate}
\end{theorem}
\begin{remark}
Professor Robert Berman pointed out to me that, using Theorem \ref{main}, results in \cite{TZZZ} and  his paper \cite{Berm2}, conditions above are also equivalent to the condition that the supremum of (normalized) Perelman's $\lambda$-functional is equal to $n\cdot {\rm Vol}(X)$. Using the terminology in \cite{Band} and \cite{TiWa}, we could say that K-semistable Fano manifolds are the same as almost K\"{a}hler-Einstein Fano manifolds.
\end{remark}
Many implications of the above conditions were known. See discussions in the next section. Our main contribution is to complete the loop of implications by showing the implication $\ref{Ksemist}\Rightarrow \ref{bddKenergy}$. 
In addition to the compactness result in Theorem \ref{specialdeg}, the main ingredient to proving this is the following result. 
\begin{theorem}\label{bdDing}
Assume $D\in |-\lambda K_X|$ for $\mathbb{Z}\ni\lambda\ge 1$, and we have a special degeneration $(\mX, \alpha\mD)$ of the klt pair $(X, \alpha D)$ such that there is a weak conical K\"{a}hler-Einstein metric on the klt pair $(\mX_0, \alpha\mD_0)$. Then for the pair $(X,\alpha D)$ and any reference K\"{a}hler metric $\omega$, the Ding-energy $F_{\omega,\alpha D}$ is bounded from below.
As a consequence, $(X, \alpha\mD)$ is log-K-semistable.
\end{theorem}
This is a generalization of Chen's theorem \cite{Chen} from the smooth setting to the general singular setting in the (logarithmic) Fano case. A simple special case of the Theorem \ref{bdDing} already played an important role in our previous work in \cite{LiSu}. Here we prove the above general result by doing explicit calculations in Lemma \ref{contlem} ( or equivalently Lemma \ref{IIcont})
to resolve a technical difficulty in \cite{LiSu}. As an immediate corollary of Theorem \ref{specialdeg} of Tian/Chen-Donaldson-Sun and Theorem \ref{bdDing}, we get:
\begin{cor}
For any smooth integral pluri-anticanonical divisor $D\in |-\lambda K_X|$ for some $\mathbb{Z}\ni \lambda\ge 1$,  the pair $
\left(X, \frac{1-R(X,D/\lambda)}{\lambda}D\right)$ is log-K-semistable. Moreover, $(X, (1-\gamma)D/\lambda)$ is log-K-stable if and only if $\gamma\in (1-\lambda, R(X,D/\lambda))$.
\end{cor}
As mentioned above, in proving Theorem \ref{bdDing}, a technical step relies on the following Lemma which is of independent interest. Denote $B_1(0)=\{z\in\mathbb{C}, |z|<1\}$.
\begin{lem}\label{contlem}
Let $\pi: \mX\rightarrow B_1(0)$ be a family of Fano varieties over the unit-disc such that the general fiber is a smooth Fano manifold and the central fiber is a Fano variety with log terminal singularities. Let $h$ be a continuous metric on the relative anti-canonical bundle $-K_{\mX/B_1(0)}$. Then the function 
\[
{\bf f}(t):=\int_{\mX_t} dV(h),
\]
is continuous as $t\rightarrow 0$.
\end{lem}
See Definition \ref{advol} for the definition of $dV(h)$.
As pointed out to me by the referee, this lemma can be seen as a strengthening of one result by Gross in \cite[Appendix B]{RoZh}. See Remark \ref{DKG} for more discussions. 

The organization of the paper is as follows. In the next section, we briefly recall some preliminary results. In section \ref{sectionthm}, we prove Theorem \ref{main} and Theorem \ref{bdDing} modulo the technical Lemma \ref{contlem}, or equivalently Lemma \ref{IIcont}. In section \ref{sectionlemma}, we prove Lemma \ref{contlem}. In the last section, 
we give examples of log-semistable pairs. 

\section{Preliminary results}

Using notations in the introduction, we recall some known results.  Firstly, to study the relation between $R(X)$ and $R(X,D/\lambda)$, we consider the following functionals. See \cite{Ti99}. (Also see for \cite{Berm, JMRL, Li12} for general twisted functionals.)
\begin{definition}\label{volume}
Let $V=\int_X\omega^n=(2\pi)^n\langle c_1(X)^n, [X]\rangle$. For any $\phi\in\mathcal{PSH}^{sm}(\omega_0)$, we define
\begin{enumerate}
\item (Monge-Amp\`{e}re energy)
\begin{equation}\label{MAenergy}
F_{\omega_0}^0(\omega_\phi)=-\frac{1}{n+1}\frac{1}{V}\sum_{i=0}^{n}\int_X\phi\omega_0^i\wedge\omega_\phi^{n-i}.
\end{equation}
\item (Norm energy)
\[
I_{\omega_0}(\omega_\phi)=\frac{1}{V}\int_X\phi(\omega^n-\omega_\phi^n),\quad J_{\omega_0}(\omega_\phi)=F^0_{\omega_0}(\phi)+\frac{1}{V}\int_X\phi\omega^n.
\]
\item (Ding energy, \cite{Ding, DT93}) 
\begin{enumerate}
\item (For Aubin's continuity method) For $t\neq 0$, define
\[
F_{\omega_0,(1-t)\omega_0}(\omega_\phi)=F_{\omega_0}^0(\phi)-\frac{1}{t}\log\left(\frac{1}{V}\int_X e^{h_{\omega_0}-t\phi}\omega_0^n\right).
\]
\item (log-Ding-energy) For $t\neq 0$, define
\begin{equation*}
F_{\omega_0,(1-t)D/\lambda}(\omega_\phi)=F_{\omega_0}^0(\phi)-\frac{1}{t}\log\left(\frac{1}{V}\int_X e^{h_{\omega_0}-t\phi}\frac{\omega_0^n}{|s|^{2(1-t)/\lambda}}\right).
\end{equation*}
For $\lambda\ge 2$ and $t=0$, we normalize $|\cdot|^2$ such that $\int_X e^{h_{\omega_0}}\omega_0^n/|s|^{2/\lambda}=\int_X\omega_0^n$, and
define 
\[
F_{\omega_0, D/\lambda}(\omega_\phi)=F_{\omega_0}^0(\phi)+\frac{1}{V}\int_X (\phi-\log|s|^{2/\lambda}) e^{h_{\omega_0}}\frac{\omega_0^n}{|s|^{2/\lambda}}.
\]
\end{enumerate}
\item (Mabuchi energy, \cite{Ma86}) \begin{enumerate}
\item (The 2nd formula for the Mabuchi energy appeared in \cite[Proposition 3.1]{Ti94})
\begin{eqnarray*}
\nu_{\omega_0}(\omega_\phi)&=&-\int_0^1\int_X S(\omega_\phi)-n)\dot{\phi}_t\omega_{\phi_t}^n dt\\
&=&\!\frac{1}{V}\int_X\log\frac{\omega_\phi^n}{\omega_0^n}\omega_\phi^n\!+\!\left(\frac{1}{V}\int_X\phi\omega_\phi^n\!+\!F_{\omega_0}^0(\phi)\right)\!+\!\frac{1}{V}\int_Xh_{\omega_0}(\omega_0^n-\omega_\phi^n).
\end{eqnarray*}
\item For Aubin's continuity method, define:
\[
\nu_{\omega_0,(1-t)\omega_0}(\omega_\phi)=\nu_{\omega_0}(\omega_\phi)+(1-t)(I_{\omega_0}-J_{\omega_0})(\omega_\phi).
\]
\item (log-Mabuchi-energy) For the conical continuity method, define:
\begin{equation*}
\nu_{\omega_0,(1-t)D/\lambda}(\omega_\phi)=
\nu_{\omega_0, (1-t)\omega_0}(\omega_\phi)+(1-t)\frac{1}{V}\int_X\log|s|_{h_0}^{2/\lambda}(\omega_\phi^n-\omega_0^n)
\end{equation*}
\end{enumerate}
\end{enumerate}
\end{definition}

Recall the following definition by Tian:
\begin{definition}[\cite{Ti97}]
A functional $F$ on $\overline{\mathcal{PSH}^{sm}}(\omega_0)$ is called linearly proper if there exist constants $C_1=C_1(X)>0$ and $C_2=C_2(X)$, such that for any $\phi\in
\overline{\mathcal{PSH}^{sm}}(\omega_0)$, we have
\[
F(\omega_\phi)\ge C_1 I_{\omega_0}(\omega_\phi)-C_2. 
\]
\end{definition}
This condition is equivalent to a strong Moser-Trudinger-Onofri inequality \cite{DT93}. The following proposition summarizes the relevant PDE theory from the variational point of view for both Aubin's continuity method and the conical continuity method. 
\begin{prop}\label{PDEcont}
\begin{enumerate}
\item 
$R(X)\ge r_0$ if and only if $F_{\omega_0, (1-t)\omega_0}$ is linearly proper when $t<r_0$, equivalently, if and only if $\nu_{\omega_0,(1-t)\omega_0}$ is linearly proper when $t<r_0$.
\item 
$R(X,D/\lambda)\ge b_0$ if and only if $F_{\omega_0, (1-t)D/\lambda}$ is linearly proper when $t<b_0$, equivalently, if and only if $\nu_{\omega_0, (1-t)D/\lambda}$ is linearly proper when $t<b_0$. 
\end{enumerate}
\end{prop}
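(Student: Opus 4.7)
The plan is to follow the now-standard variational approach for the Aubin and conical continuity methods. Parts (1) and (2) have parallel structures, differing only in that the smooth twist $(1-t)\omega_0$ is replaced by the divisor twist $(1-t)\{D\}/\lambda$; correspondingly the smooth regularity theory of Aubin is replaced by the conical regularity theory of Jeffres--Mazzeo--Rubinstein \cite{JMRL} and Donaldson \cite{Do11}. Each asserted equivalence reduces to two ingredients: (a) solvability of the continuity equation for all $t<r_0$ (resp.\ $t<b_0$) is equivalent to a uniform upper bound on $(I_{\omega_0}-J_{\omega_0})(\omega_{\phi_t})$ along the solutions; and (b) linear properness of the Ding functional $F$ and of the Mabuchi functional $\nu$ (with the same twist) are equivalent.

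For the ``$F$-properness $\Rightarrow$ solvability'' direction of (1), I would fix $t_0<r_0$ and close up the Aubin path on $[0,t_0]$. Openness at each such $t$ is Aubin's classical implicit function argument. To close the set of solvable parameters, note that $\phi_t$ is the unique minimizer of $F_{\omega_0,(1-t)\omega_0}$, so $F_{\omega_0,(1-t)\omega_0}(\omega_{\phi_t})\le F_{\omega_0,(1-t)\omega_0}(0)=0$; combined with linear properness at $t_0$ and the elementary monotonicity $F_{\omega_0,(1-t)\omega_0}\ge F_{\omega_0,(1-t_0)\omega_0}$ for $t\le t_0$, this produces a uniform bound on $(I_{\omega_0}-J_{\omega_0})(\omega_{\phi_t})$. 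A uniform $C^0$ bound then follows from Tian's Harnack inequality for the complex Monge--Amp\`ere equation, and the higher order estimates are standard.

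For the converse ``solvability $\Rightarrow$ $F$-properness'', I would use the solutions $\phi_t$ themselves as comparison potentials. For any $\phi\in\mathcal{PSH}(\omega_0)$ and $t<t_0<r_0$, the minimizing property $F_{\omega_0,(1-t)\omega_0}(\omega_{\phi_t})\le F_{\omega_0,(1-t)\omega_0}(\omega_\phi)$ combined with Tian's identity relating the $F$-functional at different twist values along the Aubin path (essentially the first variation of $F_{\omega_0,(1-t)\omega_0}$ in $t$) yields, after sending $t\to t_0$ and rearranging, the lower bound $F_{\omega_0,(1-t_0)\omega_0}(\omega_\phi)\ge C_1(I_{\omega_0}-J_{\omega_0})(\omega_\phi)-C_2$ with $C_1$ of order $1-t_0/r_0>0$, which is linear properness; this is essentially the argument of \cite{Ti97}. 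The $F\Leftrightarrow \nu$ equivalence rests on two inequalities: Jensen's inequality applied to the Ricci potential gives $\nu-F\ge \mathrm{const}$, so $F$-properness implies $\nu$-properness; and Berman's convexity results \cite{Berm2} compare the two functionals in the reverse direction up to a $(I-J)$ error with small coefficient, which is harmlessly absorbed by the strict inequality $t_0<r_0$.

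Part (2) is handled by exactly the same scheme: openness along the conical path and the a priori estimates for the conical Monge--Amp\`ere equation with singular right hand side $|s|^{-2(1-t)/\lambda}e^{h_{\omega_0}-t\psi}$ are provided by \cite{JMRL} and \cite{Do11}, while the functional identities extend by direct computation with the conical log-term in $\nu_{\omega_0,(1-t)D/\lambda}$ playing the role analogous to $(1-t)(I-J)$ in the smooth case. The main technical obstacle I anticipate is checking that the Harnack-type $C^0$ estimate and the comparison formulas remain valid in the presence of the singular weight; historically this is what made the conical version harder to establish, but once the Jeffres--Mazzeo--Rubinstein linear theory is in hand the argument above goes through with only cosmetic changes.
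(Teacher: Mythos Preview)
Your proposal is correct and covers the same statement, but the architecture differs from the paper's in two places.

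For the direction ``solvability $\Rightarrow$ properness'', the paper does not use Tian's first-variation argument to produce an explicit properness constant. Instead it runs an interpolation: for $0<t_1\ll 1$ the twisted functionals are proper by the (log-)$\alpha$-invariant; for any $t_2<R_s(X)$ (resp.\ $<R_c(X,D/\lambda)$) the solution of the continuity equation realizes the minimum of the twisted $F$ and $\nu$; since $\nu_{\omega_0,(1-t)\cdot}$ is linear in $t$ and $F_{\omega_0,(1-t)\cdot}$ is concave in $t$ (H\"older), properness at intermediate $t$ follows by interpolation between $t_1$ and $t_2$. This interpolation scheme also dispenses with a separate $F\Leftrightarrow\nu$ comparison: both functionals enjoy the same three properties (proper at small $t$, minimized by the solution at $t_2$, concave/linear in $t$), so they are handled in parallel rather than via Jensen plus a Berman-type inequality as you propose. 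Your route gives an explicit coefficient of order $1-t_0/r_0$; the paper's is shorter and treats $F$ and $\nu$ symmetrically.

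For ``properness $\Rightarrow$ solvability'', your monotonicity step is fine (it follows from the same H\"older concavity used above), but the paper again phrases the uniform $(I-J)$ bound as coming from the interpolation argument, and then obtains the $C^0$ estimate via Moser iteration using the uniform Sobolev constant along the path, rather than a Harnack inequality. Both mechanisms are standard and either suffices.
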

The proposition is now well known (cf.  \cite{Berm}, \cite{LiSu}, \cite{SoWa}, \cite{Sze}). But for the reader's convenience, I will sketch the proof of this proposition. See \cite[Chapter 3]{Li12} for more discussions and references related to this proposition.
\begin{proof}
We first assume the solvability of \eqref{aubin} (resp.  \eqref{conical}). Then the argument splits into the following steps.
\begin{enumerate}
\item
When $0<t_1\ll 1$, the functionals $F_{\omega_0,(1-t_1)\omega_0}$, $\nu_{\omega_0,(1-t_1)\omega_0}$ (resp. $F_{\omega_0,(1-t_1)D/\lambda}$, $\nu_{\omega_0,(1-t_1)D/\lambda}$ ) are proper. This can be proved for Mabuchi energy using Tian's $\alpha$-invariant (\cite{Ding}, \cite{Ti99}) (resp. log-$\alpha$-invariant \cite{Berm, JMRL}). Then by \cite{Berm} and \cite{Rub}, the properness of Mabuchi energy and properness of Ding energy are equivalent.
\item
The solution of the equation \eqref{aubin} (resp. \eqref{conical}) for $t=t_2<R(X)$ (resp. $t=t_2<R(X,D/\lambda)$) obtains the minimum of both functionals $F_{\omega_0,(1-t_2)\omega_0}$ and $\nu_{\omega_0,(1-t_2)\omega_0}$  (resp. $F_{\omega_0, (1-t_2)D/\lambda}$ and $\nu_{\omega_0,(1-t_2)D/\lambda}$). (cf. \cite{BM87}, \cite{Bern}, \cite{Berm}, \cite{BBGZ})
\item
The functional $\nu_{\omega_0,(1-t)\omega_0}$ (resp. $\nu_{\omega_0, (1-t)D/\lambda}$) is linear in the variable $t$.  Also, by H\"{o}lder's inequality, the functional $F_{\omega_0, (1-t)\omega_0}$ (resp. $F_{\omega_0, (1-t)D/\lambda}$) is concave downward in the variable $t$. (See \cite{LiSu}) So the properness of functionals for intermediate values $0<t_1<t<t_2<R(X)$ (resp. $0<t_1<t<t_2<R(X,D/\lambda)$) follows from interpolations.
\end{enumerate} 
To prove the other direction, we assume the properness of functionals. 
\begin{enumerate}
\item
Start the continuity method. For Aubin's continuity method \eqref{aubin}, one can get the solution at $t=0$ using Yau's theorem \cite{Yau} on prescribing Ricci curvatures. In fact, Tian's $\alpha$-invariant \cite{Ti87} bounds $R(X)$ away from 0 by the inequality $R(X)\ge\frac{n+1}{n}\alpha(X)>0$. For the conical continuity method \eqref{conical}, for simplicity, we assume $\lambda\ge 2$. One can choose $t_0$ such that $1-\lambda< t_0< 1-\lambda+\epsilon<0$ with $0<\epsilon\ll 1$.  The solvability of \eqref{conical} at $t=t_0$ is proved in the same way as in the case of smooth K\"{a}hler-Einstein metric with negative Ricci curvature proved by Aubin and by Yau \cite{Yau}. Because the cone angle $0< 2\pi\beta=2\pi(1-(1-t_0)/\lambda)\ll 1$ is very small, the cone singularities do not cause troubles in the analysis. In general, all cases (for all $\lambda\ge 1$) can be dealt with as in \cite{Berm}, \cite{JMRL}. See also \cite{LiSu}.
\item
The openness of solution set holds for \eqref{aubin} by \cite{Au78} (see also \cite{Ti99}) and for \eqref{conical} by \cite{Do11}.
\item
The closedness of solution set follows from a priori  estimates which are true by the assumption that the functionals are proper. This is proved as follows. Firstly, by the interpolation argument as above, the functionals are uniformly proper along continuity methods. So $I_{\omega_0}(\omega_\phi)$ are uniformly bounded. Secondly, one can show that the Sobolev constants are uniformly bounded along the continuity methods, and so the $C^0$-estimates follow from Moser iterations. (cf. \cite{Ti87}, \cite{JMRL}) Then by the theory of (singular) Monge-Amp\`{e}re equations, $C^0$-estimates are sufficient for higher order estimates. For more details, see \cite{Au78}, \cite{JMRL}.
\end{enumerate}

\end{proof}
\begin{cor}\label{corollary1}
\begin{enumerate}
\item 
$R(X,D/\lambda)\le R(X)$.
\item (\cite{SoWa})
$R(X,D/\lambda)\ge R(X) (\lambda-1)/(\lambda-R(X))$.
\end{enumerate}
\end{cor}
\begin{proof}
\begin{enumerate}
\item
Because $F_{\omega_0, (1-t)\omega_0}\ge F_{\omega_0, (1-t)D/\lambda}-C$ for constant $C=\frac{1-t}{\lambda}\log \max_X|s|^2$, this follows from the above Proposition immediately.
\item
By the H\"{o}lder's inequality, we have
\[
\int_X e^{h_{\omega_0}-t\psi}\frac{\omega_0^n}{|s|^{2(1-t)}}\le e^{\|h_{\omega_0}\|_{L^{\infty}}/q} \left(\int_X e^{h_{\omega_0}-tp\psi}\omega_0^n\right)^{1/p}\left(\int_X\frac{1}{|s|^{2(1-t)q/\lambda}}\omega_0^n\right)^{1/q}.
\]
Then the inequality follows by solving the following conditions:
\[
t\cdot p< R(X), \quad p^{-1}+q^{-1}=1,\quad (1-t)q/\lambda<1.
\]
\end{enumerate}
\end{proof}
\begin{remark}
If we define the invariant
\[
R_c(X)=\sup\{ R(X,D/\lambda); \mbox{ smooth } D\in |-\lambda K_X|, \lambda\in\mathbb{Z}_{\ge 1}\},
\]
then the above corollary immediately implies $R_c(X)=R(X)$. However, $R(X,D/\lambda)$ may vary with the divisor $D$ when $R(X)<1$. In \cite{LiSu}, on toric Fano manifolds, we constructed some $D$ and $\lambda$ with $R(X,D/\lambda)=R(X)$. But,  by Sz\'{e}kelyhidi \cite{Sze2}, there exist smooth divisors $D'$ with  $R(X,D'/\lambda)<R(X)$.
\end{remark}
As mentioned before,  many implications of different conditions in Theorem \ref{main}  are already well known. We summarize them in the following items.
\begin{itemize}
\item
$\ref{bddKenergy}\Rightarrow \ref{Ksemist}$. This direction is well known. See Tian \cite{Ti97}, Paul-Tian \cite{PT09}. (See also discussions in \cite{Li12})
\item
$\ref{bddKenergy}\Rightarrow \ref{R=1}$. This was proved by Bando \cite{Band}.
\item
$\ref{bddKenergy}\Rightarrow\ref{B=1}$. This follows from the interpolation argument used in Li-Sun \cite{LiSu}, Song-Wang \cite{SoWa}. 
\item
$\ref{bddKenergy}\Rightarrow\ref{calabi}$. This follows from the work by Bando \cite{Band}.
\item
$\ref{calabi}\Rightarrow\ref{Ksemist}$. This was proved by Donaldson \cite{Do05}.
\item
$\ref{B=1}\Rightarrow\ref{R=1}$. This follows because in general we have $R(X,D/\lambda)\le R(X)$ by Corollary \ref{corollary1}.(i). 
\item
$\ref{R=1}\Rightarrow \ref{Ksemist}$. This was proved by Sz\'{e}kelyhidi \cite{Sze}.
\item
$\ref{R=1}\Rightarrow\ref{B=1}$ when $\lambda\ge 2$. This follows from Song-Wang's estimate in Corollary \ref{corollary1} .(ii).
\item
$\ref{Ksemist}\Rightarrow \ref{B=1}$. This was actually showed by Chen-Donaldson-Sun \cite{CDS} and Tian \cite{Ti12}. See Proposition \ref{KsemitoB} for an explanation.
\end{itemize}
Note that the above implications say that the condition \ref{Ksemist} of K-semistability  is the weakest, while the condition \ref{bddKenergy} that Mabuchi-energy is bounded from below  is the strongest. In order to complete the proof of Theorem \ref{main}, we only need to show that $\ref{Ksemist} \Rightarrow \ref{B=1}$ and $\ref{B=1}\Rightarrow \ref{bddKenergy}$ hold, so that \ref{Ksemist} is indeed equivalent to \ref{bddKenergy}. The first implication $\ref{Ksemist} \Rightarrow \ref{B=1}$ was essentially shown by Tian \cite{Ti12} and Chen-Donaldson-Sun \cite{CDS}. For completeness we outline how to deduce this implication from the compactness result of Theorem \ref{specialdeg}.
\begin{prop}\label{KsemitoB}
$\ref{Ksemist} \Rightarrow  \ref{B=1}$.
\end{prop}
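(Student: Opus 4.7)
The plan is to argue by contradiction. Suppose $\gamma := R_c(X,D/\lambda) < 1$ and set $\alpha := (1-\gamma)/\lambda \in (0, 1/\lambda)$. By Theorem \ref{specialdeg}, there is a special degeneration $\pi : (\mX, \mD) \to \mathbb{C}$ of $(X,D)$ whose central fibre $(\mX_0, \alpha\mD_0)$ is klt and admits a conical K\"ahler--Einstein metric $\hat\omega_\gamma$ with Einstein constant $\gamma$. This degeneration is necessarily non-product: otherwise $\mX_0 = X$ and $(X, \alpha D)$ would carry a conical KE, so the openness step of the continuity method (proved in the previous Proposition using \cite{Do11}) would extend the solution past $t = \gamma$, contradicting the definition of $R_c(X, D/\lambda)$.

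The next step is to extract two opposing sign conditions on the generalized log-Futaki invariant of this degeneration by varying the divisor coefficient. First, since $(\mX_0, \alpha\mD_0)$ admits a conical KE, the Futaki-type identity generalized to the conical klt setting (following Ding--Tian and Berman) forces
\[
Fut(\mX, \alpha\mD, -K_{\mX/\mathbb{C}}) = 0.
\]
Second, for any $\alpha' \in (\alpha, 1/\lambda)$ sufficiently close to $\alpha$, the pair $(X, \alpha' D)$ admits a Fano conical KE corresponding to the parameter $t' = 1 - \lambda\alpha' \in (0, \gamma)$ in \eqref{conical}, and $(\mX_0, \alpha'\mD_0)$ stays klt by openness of the klt condition in the coefficient. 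Hence $(X, \alpha' D, -K_X)$ is log-K-polystable by Berman's theorem, so the non-product degeneration $\mX$ satisfies
\[
Fut(\mX, \alpha'\mD, -K_{\mX/\mathbb{C}}) > 0.
\]

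These two evaluations are linked by the affine dependence of the log-Futaki invariant on the coefficient, which is immediate from its definition:
\[
Fut(\mX, \alpha''\mD, -K_{\mX/\mathbb{C}}) = Fut(\mX, 0, -K_{\mX/\mathbb{C}}) - n\alpha'' B,
\]
where $B$ depends only on the degeneration and the normalization of the Hamiltonian $\theta_v$. Substituting $\alpha'' = \alpha$ gives $Fut(\mX, 0, -K_{\mX/\mathbb{C}}) = n\alpha B$; substituting $\alpha'' = \alpha'$ and comparing yields $n(\alpha - \alpha')B > 0$, which together with $\alpha' > \alpha$ forces $B < 0$. Therefore $Fut(\mX, 0, -K_{\mX/\mathbb{C}}) = n\alpha B < 0$ (as $\alpha > 0$), contradicting K-semistability of $(X, -K_X)$.

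The main obstacle is justifying the two Futaki inputs above in the singular conical setting: the vanishing of log-Futaki for a conical KE pair on a $\mathbb{Q}$-Fano variety, and the log-K-polystability of such a pair. Both are classical in the smooth KE case, but require extending integration-by-parts and the Matsushima/Futaki-type identity across the cone singularity of $\hat\omega_\gamma$ along $\mD_0$ and carefully tracking normalizations---this is the kind of technical calculation alluded to in the remark following Theorem \ref{main}. The required extensions are now available via the analytic theory of \cite{JMRL, Do11, CDS, Ti12} and Berman's log Ding functional. The remaining ingredients---non-productness via openness of the continuity method, the affine dependence of $Fut$ on the coefficient, and openness of klt in the coefficient---are elementary.
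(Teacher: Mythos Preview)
Your proof is correct and follows essentially the same contradiction argument as the paper: vanishing of the log-Futaki invariant at coefficient $(1-\gamma)/\lambda$ from the conical KE on the central fibre, strict positivity at a second coefficient, and linearity in the coefficient to force negativity at coefficient $0$. The only difference is your choice of the second evaluation point---you take $\alpha'$ just above $\alpha$ and invoke (log) Berman for log-K-polystability, whereas the paper evaluates at $(1-\epsilon)/\lambda$ with $0<\epsilon\ll 1$ and cites \cite{LiSu} directly for the strict positivity there; the paper also leaves the non-productness argument implicit.
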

\begin{proof}
Prove by contradiction. Suppose $\gamma=R(X,D/\lambda)<1$.  Then by Theorem \ref{specialdeg}, we get a special degeneration $(\mX, (1-\gamma)\mD)$ of $(X, (1-\gamma)D)$ such that $(\mX_0,(1-\gamma)\mD_0/\lambda)$ admits a conical K\"{a}hler-Einstein metric. If the total space $\mX_0$ is smooth, then by \cite{Do11} or \cite{Li11}, we have 
\[
Fut(\mX_0,(1-\gamma)\mD_0/\lambda,v)=Fut(\mX,(1-\gamma)\mD/\lambda, -K_{\mX/\mathbb{C}})=0,
\] 
where $v$ is the holomorphic vector field on the central fibre $(\mX_0,\mD_0)$ coming from the $\mathbb{C}^*$-action. If $\mX_0$ is singular, there are various ways to get this. One way is to use two 
known results. First, by \cite{BBEGZ} the conical K\"{a}hler-Einstein metric obtains the minimum of log-Mabuchi-energy. Second, the log-Futaki-invariant is the derivative of the log-Mabuchi-energy along a path in the space of K\"{a}hler metrics generated by a one-parameter subgroup (\cite{Li11}). Note that the singularities do not cause troubles in the calculations by the work of \cite{BBEGZ}.  One can also use the log-Ding-energy instead of the log-Mabuchi-energy. To the author's knowledge, this idea was first used by Tian \cite[page 73]{Ti99} in the smooth absolute case and by Berman \cite{Berm2} in the singular setting, and also used by \cite{CDS}. Note that in the singular setting, one can actually lift all the calculations to a log resolution similar to the calculations in the author's thesis \cite{Li12}.

On the other hand, by \cite{LiSu} when $0<\epsilon\ll 1$, $F_{\omega_0, (1-\epsilon)D/\lambda}$ (or $\nu_{\omega_0, (1-\epsilon)D/\lambda}$) is proper and so (see \cite{LiSu})
\[
Fut(\mX_0, (1-\epsilon)\mD_0/\lambda, v)=Fut(\mX, (1-\epsilon)\mD/\lambda, -K_{\mX/\mathbb{C}})>0.
\]
 By the linearity of log-Futaki invariant $Fut(\mX_0, (1-t)\mD_0/\lambda, v)$ in the variable $t$, we get 
$Fut(\mX, (1-1)\mD/\lambda, -K_{\mX/\mathbb{C}})=Fut(\mX, -K_{\mX/\mathbb{C}})<0$.  This is in contradiction with $(X, -K_X)$ being K-semistable. 
\end{proof}

\section{Proof of Theorem \ref{main} and Theorem \ref{bdDing}}\label{sectionthm}

By the above discussions, we just need to prove the following proposition.
\begin{prop}\label{BtoK}
$\ref{B=1}\Rightarrow \ref{bddKenergy}$.
\end{prop}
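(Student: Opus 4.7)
The plan is to show that the Ding energy $F_{\omega_0,0}$ is bounded below on $X$; then the Mabuchi energy is also bounded below by the well-known equivalence noted in the statement of the theorem.

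The first ingredient is an exact formula for the infimum of the twisted Ding functional. Under the hypothesis $R_c(X,D/\lambda)=1$, for each $t\in(0,1)$ the conical K\"ahler-Einstein potential $\psi_t$ solving \eqref{conical} exists and, by the previous proposition, it is the minimizer of $F_{\omega_0,(1-t)D/\lambda}$. Substituting the conical equation $(\omega_0+\sddbar\psi_t)^n=e^{h_{\omega_0}-t\psi_t}\omega_0^n/|s|^{2(1-t)/\lambda}$ into the definition of the functional, the normalization $\int_X\omega_{\psi_t}^n=V$ forces the argument of the logarithm to equal $1$, so
\[
\inf_{\phi}F_{\omega_0,(1-t)D/\lambda}(\omega_\phi)=F_{\omega_0,(1-t)D/\lambda}(\omega_{\psi_t})=F^0_{\omega_0}(\psi_t).
\]

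Next I would pass to the limit $t\to 1^-$. For any fixed smooth $\phi$, dominated convergence (the weight $|s|^{-2(1-t)/\lambda}$ is $L^p$ with uniform bound for $1-t$ small enough and converges pointwise to $1$) gives
\[
F_{\omega_0,(1-t)D/\lambda}(\omega_\phi)\longrightarrow F_{\omega_0,0}(\omega_\phi)\quad\text{as }t\to 1^-,
\]
so combining with the previous identity
\[
F_{\omega_0,0}(\omega_\phi)\;\ge\;\liminf_{t\to 1^-}F^0_{\omega_0}(\psi_t).
\]
The problem therefore reduces to showing that the right-hand side is finite (in particular, independent of $\phi$).

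The finiteness is where Theorem \ref{specialdeg} enters: as $t\to 1^-$ the metrics $\omega_{\psi_t}$ Gromov-Hausdorff converge to a \emph{smooth} K\"ahler-Einstein metric $\hat\omega_1$ (since $1-\gamma=0$) on a $\mathbb{Q}$-Fano variety $\mX_0$, which is the central fibre of a special degeneration $\mX\to\mathbb{C}$ of $X$. If the special degeneration is the trivial product $\mX\cong X\times\mathbb{C}$, then $X$ itself admits a K\"ahler-Einstein metric and the Mabuchi energy is bounded below directly by Bando-Mabuchi. Otherwise $\psi_t$ does not converge on $X$ in the usual sense, and one must instead show that $F^0_{\omega_0}(\psi_t)$ extends continuously across the degeneration to an analogous energy functional on $\mX_0$, whose value at $\hat\omega_1$ is finite by construction. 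This continuity is exactly the content of Lemma \ref{IIcont}, whose explicit calculations resolve the technical gap identified in \cite{LiSu}.

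The main obstacle is this last continuity step: because the domain of the potentials changes in the limit, one cannot naively pass the energy $F^0_{\omega_0}$ to the limit on $X$ — one must instead exploit the ambient family $\mX\to\mathbb{C}$ and track the variation of $F^0$ fiberwise, using the $\mathbb{C}^*$-equivariance of the special degeneration. Once this is done, $F_{\omega_0,0}$ is bounded below by a finite constant, and $\nu_{\omega_0}$ inherits the same lower bound via the standard entropy inequality $\nu_{\omega_0}\ge F_{\omega_0,0}+\mathrm{const}$.
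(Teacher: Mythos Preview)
Your overall reduction is sound up to the inequality
\[
F_{\omega_0,0}(\omega_\phi)\;\ge\;\liminf_{t\to 1^-}F^0_{\omega_0}(\psi_t),
\]
but the mechanism you propose for showing the right-hand side is finite is not the one in the paper, and as stated it does not work. You claim that Lemma~\ref{IIcont} gives the ``continuity of $F^0_{\omega_0}(\psi_t)$ across the degeneration.'' It does not: Lemma~\ref{IIcont} concerns the continuity, in the \emph{fibre parameter} $t\in B_1(0)\subset\mathbb{C}$ of the special degeneration, of the quantity
\[
{\rm II}(t)=-\log\int_{\mX_t}e^{-\Phi|_{\mX_t}}dV(h_\Omega|_{\mX_t}),
\]
where $\Phi$ is a \emph{fixed} bounded solution of the homogeneous complex Monge--Amp\`ere equation on the total space $\mX|_{B_1(0)}$ with boundary value $\phi$. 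It says nothing about the behaviour of $F^0_{\omega_0}(\psi_t)$ along the conical continuity path; in particular the parameter $t$ in your argument (the cone-angle parameter on $X$) and the parameter $t$ in Lemma~\ref{IIcont} (the base coordinate of $\mX\to\mathbb{C}$) are unrelated.

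The paper does \emph{not} track the conical K\"ahler--Einstein potentials $\psi_t$ at all. For a fixed test potential $\phi$ on $X=\mX_1$, it solves the Dirichlet problem for the degenerate Monge--Amp\`ere equation on (a resolution of) $\mX|_{B_1(0)}$ with boundary data $\phi$, obtaining a bounded $\Omega$-psh $\Phi$. Writing the fibrewise Ding energy as ${\rm I}+{\rm II}$, the term ${\rm I}=F^0_{\Omega|_{\mX_t}}(\Phi|_{\mX_t})$ is subharmonic in $t$ because $\Phi$ is HCMA, and ${\rm II}$ is subharmonic by Berndtsson--P\u{a}un positivity of direct images. Lemma~\ref{IIcont} is needed only to upgrade the extension of ${\rm II}$ across $t=0$ from ``as a current'' to ``as a genuine subharmonic function,'' so that the maximum principle yields
\[
F^X_{\omega_0}(\omega_\phi)=({\rm I}+{\rm II})(1)\;\ge\;({\rm I}+{\rm II})(0)=F^{\mX_0}_{\Omega|_{\mX_0}}(\Omega_\Phi|_{\mX_0})\;\ge\;F^{\mX_0}_{\Omega|_{\mX_0}}(\Omega_{0,KE}),
\]
the last inequality because $\mX_0$ carries a K\"ahler--Einstein metric by Theorem~\ref{specialdeg}. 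This geodesic/convexity step is the actual content of the proof; your proposal acknowledges the difficulty (``the domain of the potentials changes in the limit'') but does not supply a substitute for it, and there is no reason to expect $F^0_{\omega_0}(\psi_t)$---a quantity computed entirely on $X$---to stay bounded when $(I-J)_{\omega_0}(\psi_t)\to\infty$.
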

Given Theorem \ref{specialdeg}, Proposition \ref{BtoK} follows from Theorem \ref{bdDing} which was conjectured in \cite{LiSu}, and the fact that the lower boundedness of Ding energy is equivalent to  the lower boundedness of Mabuchi energy. The latter was first proved independently by H. Li \cite{LH} and Y. Rubinstein \cite{Rub}. Also see  \cite{Berm} and the references therein.
Now we will prove Theorem \ref{bdDing} by following the line of arguments in \cite{LiSu} where the result was proved in a simple case of one isolated singularity. Here some more arguments are needed. See Lemma \ref{IIcont}.  Before starting the proof of Theorem \ref{bdDing},  we need some more definitions and remarks.
 \begin{definition}\label{advol}
 Assume $(X,\alpha D)$ is a klt pair, where $D=\{s=0\}\in |-\lambda K_X|$.  Assume that $mK_X$ is Cartier for $\mathbb{Z}\ni m\ge 1$. For any small open set $U\subset X$, let $v$ be a local generator of $\mathcal{O}(mK_X)(U)$ and $v^*$ be the dual generator of $\mathcal{O}(-mK_X)(U)$. For any Hermitian metric $h$ on $-K_X$ with bounded potentials, we define the \textbf{adapted volume form} by the formula:
 \[
 dV((X,\alpha D), h)=(\sqrt{-1})^n\frac{|v^*|_{h}^{2/m}(v\wedge\bar{v})^{1/m}}{|s|_{h}^{2\alpha}}=\frac{dV(h)}{|s|_h^{2\alpha}}.
 \] 
In the following we will just write $dV(h)$ for $dV((X,0),h)$. 
\end{definition}
 \begin{remark}
On a smooth manifold $X$, assume $z=\{z_i\}$ is a local coordinate chart. For any smooth Hermitian metric $h$ on $-K_X$ with Chern curvature equal to $\omega=-\sddbar\log h$, the adapted volume form is given by 
\[
dV(h)=(\sqrt{-1})^n |\partial_z|_h^2 dz\wedge d\bar{z}. 
\]
and it satisfies the equation:
\begin{equation}\label{transadapt}
e^{h_{\omega}}\omega^n=\frac{dV(h)}{V^{-1}\int_XdV(h)}.
\end{equation}
So $ -\sddbar\log (e^{h_{\omega}}\omega^n)=\omega=-\sddbar\log dV(h)$ (See \eqref{Ricvolume}). 
In the previous Monge-Amp\`{e}re equations and definition of  (log-)Ding energies, we could use either $e^{h_{\omega}}\omega^n$ or adapted volume form $dV(h)$. But in the following to apply Berndtsson's subharmonicity result, we need to work with adapted volume forms.
\end{remark} 

\begin{remark}
Since the klt property is important for us in the following, we briefly explain why the klt property holds in Theorem \ref{specialdeg}. For more details, see \cite{BBEGZ}, \cite{CDS}, \cite{Ti12}. 

In general log-setting, $(\mX_0, (1-\gamma)\mD_0/\lambda)$ being Kawamata log terminal is equivalent to the integrability of 
$dV(h)/|s_0|_{h}^{2(1-\gamma)/\lambda}$ where $s_0\in |-\lambda K_{\mX_0}|$ satisfies $\mathcal{D}_0=\{s_0=0\}$, and $h$ is any Hermitian metric on $-K_{\mX_0}$ with bounded local potentials. If $\hat{\omega}_\gamma$ is a conical K\"{a}hler-Einstein metric  on $(\mX_0, (1-\gamma)\mD_0/\lambda)$ and $h=\hat{h}_\gamma$ is the corresponding Hermitian metric on $-K_{\mX_0}$,  then it's easy to verify that, on any small open set $U_0\subset\mX_0$,
\[
dV(\hat{h}_\gamma)=(\sqrt{-1})^n|v_0^*|_{\hat{h}_{\gamma}}^{2/m}(v_0\wedge\bar{v}_0)^{1/m}=e^{f} |s_0|_{\hat{h}_\gamma}^{2(1-\gamma)/\lambda}\hat{\omega}_\gamma^n, 
\]
where $f$ is a bounded pluriharmonic function on $U_0$. So we have
\[
\frac{dV(\hat{h}_\gamma)}{|s_0|_{\hat{h}_\gamma}^{2(1-\gamma)/\lambda}}=\frac{(\sqrt{-1})^n|v_0^*|_{\hat{h}_\gamma}^{2/m}(v_0\wedge\bar{v}_0)^{1/m}}{|s_0|_{\hat{h}_\gamma}^{2(1-\gamma)/\lambda}\hat{\omega}_\gamma^n}\hat{\omega}_\gamma^n=e^{f}\hat{\omega}_\gamma^n.
\]
So, in particular, the klt property of $(\mX_0, (1-\gamma)\mD_0)$ in Theorem \ref{specialdeg} follows from the following two facts.
\begin{itemize}
\item
As remarked before, proving Tian's partial $C^0$-estimates is an important step to prove Theorem \ref{specialdeg}.  By its proof, we know that there exists a non-vanishing generator $v_0^*$ of $\mathcal{O}(-m K_{\mX_0})(U_0)$ for some positive integer $m$.
\item
The volume form $\int_{U_0}\hat{\omega}_{\gamma}^n=Vol_{\hat{\omega}_{\gamma}}(U_0)$ is finite, because $\hat{\omega}_{\gamma}$ is the Gromov-Hausdorff limit of the conical K\"{a}hler-Einstein metrics $\hat{\omega}_{t}$ on $(X, (1-t)D/\lambda)$, as $t\rightarrow\gamma$.
\end{itemize}
\end{remark}
\begin{proof}[Proof of Theorem \ref{bdDing}]
For simplicity of notations, we will mainly concentrate on the absolute case, i.e. when $D=\mD=\emptyset$. We will point out the straightforward modification of arguments for the log setting in Remark \ref{logmod}. 
\begin{enumerate}
\item[{\bf Step 1:}]
We can first embed the special degeneration $\pi: \mX\rightarrow\mathbb{C}$ equivariantly into $\pi_2: \mathbb{P}^N\times \mathbb{C}\rightarrow\mathbb{C}$ using the complete linear system $|-m K_{\mX/\mathbb{C}}|$ and then restrict  $\frac{1}{m}\omega_{FS}^{\mathbb{P}^N}+a\cdot dw\wedge d\bar{w}$ to $\mX$ in order to get a reference K\"{a}hler metric $\Omega$ on $\mX$. Without loss of generality, we can take $\omega_0=\Omega|_{\mX_1}=\Omega|_{X}$. 
\item[{\bf Step 2:}]
For any smooth function $\phi\in \mathcal{PSH}^{sm}(\omega_0)$, we want to construct a geodesic ray from $\omega_\phi$ in the space of K\"{a}hler metrics using the special degeneration $\mX$. It's
well known that this is equivalent to solving the following homogeneous complex Monge-Amp\`{e}re equation on $\mX|_{B_1(0)}=\pi^{-1}(B_1(0))$:
\begin{equation}\label{HCMAbefore}
\Omega+\sqrt{-1}\partial\bar{\partial}\Phi\ge 0, \; (\Omega+\sddbar\Phi)^n=0 \mbox{ on } \mX|_{B_1(0)}; \; \Phi|_{\partial (\mX|_{B_1(0)})=X\times S^1}=\phi.
\end{equation}
It's not difficult to get a bounded solution $\Phi\in L^{\infty}(\mX|_{B_1(0)})$. This is because that we can construct a subsolution as a barrier and Perron's method will give us a bounded solution.  
To construct such a subsolution, note that on $\mX^*:=\mX\backslash \mX_0\cong X\times\mathbb{C}^*$, we can write: 
\[
\omega_\phi+a\cdot dw\wedge d\bar{w}=\omega_0+\sddbar (\phi+a(|w|^2-1))=\Omega+\sddbar\Psi.
\]
Choose a radially symmetric cut-off function $\eta: B_1(0)\rightarrow \mathbb{R}$ such that $\eta(|w|)=0$ when $|w|<1/3$ and $\eta(|w|)=1$ when $|w|>2/3$. We then define $
\underline{\Psi}=\eta(|w|)\Psi$.  
It's easy to verify that when $a$ is sufficiently large $\underline{\Psi}$ satisfies $\Omega+\sddbar\underline{\Psi}\ge 0$, and hence $\underline{\Psi}$ is a sub solution to the equation \eqref{HCMAbefore}.
For more details, see \cite{PS09} and \cite{LiSu}.

For later estimates, we need the uniform continuity of $\Phi$ away from $\mX^{sing}$. Fortunately, Phong-Sturm \cite{PS09} has proved that $\Phi$ is even uniformly $C^{1,\alpha}$ away from $\mX^{sing}$. For the reader's convenience, we explain briefly how this was achieved in \cite{PS09}. To get higher regularity, Phong-Sturm lifted the problem from the singular space $\mX$ to a smooth space by taking a resolution of singularities $\mu: \tilde{\mX}\rightarrow \mX$. Then the equation \eqref{HCMAbefore} is lifted to the following Dirichlet problem of homogeneous Monge-Amp\`{e}re equation on $\tilde{\mX}|_{B_1(0)}=(\pi\circ \mu)^{-1}(B_1(0))$:  
\[
\mu^*\Omega+\sqrt{-1}\partial\bar{\partial}\tilde{\Phi}\ge 0, \; (\mu^*\Omega+\sqrt{-1}\partial\bar{\partial}\tilde{\Phi})^{n+1}=0 \mbox{ on } \tilde{\mX}|_{B_1(0)} ; \; \tilde{\Phi}|_{\partial (\tilde{\mX}|_{B_1(0)})=S^1\times X}=\phi.
\]
Then they approximated this degenerate equation by a family of non-degenerate  complex Monge-Amp\`{e}re equations:
\[
(\Omega_\epsilon+\sddbar \tilde{\Phi}_{\epsilon})^{n+1}=\epsilon \Omega_\epsilon^{n+1} \mbox{ on } \tilde{\mX}|_{B_1(0)};\; \tilde{\Phi}_\epsilon |_{\partial (\tilde{\mX}|_{B_1(0)})}=\phi.
\]
Here $\Omega_\epsilon=\mu^*\Omega+\epsilon\sddbar\log \|\cdot\|_{E}^2$, where $\|\cdot\|_{E}$ is a Hermitian metric on the line bundle $\mathcal{O}_{\mX}(E)$ which can be chosen so that $\Omega_\epsilon$ is strictly positive on $\tilde{\mX}$ (because $\mu^*[\Omega]-\epsilon E$ is ample on $\tilde{\mX}$ over $\mathbb{C}$). They then derived the uniform $C^{2}$ estimate on any compact set away from $E$. So the limit $\tilde{\Phi}$ of $\Phi_\epsilon$ is $C^{1,\alpha}$ away from $E$. For details, see \cite{PS09}. 
 
Now since $\tilde{\Phi}$ is plurisubharmonic on the (compact and connected) fibers of $\mu: \tilde{\mX}\rightarrow \mX$, it restricts to constant functions on fibers of $\mu$ and hence descends to a bounded $\Omega$-plurisubharmonic function $\Phi$ on $\mX|_{B_1(0)}=\pi^{-1}(B_1(0))$. Moreover, $\Phi$ is $C^{1,\alpha}$ away from $\mX^{sing}=\mu(E)$. 

 \item[{\bf Step 3:}] 
We choose a Hermitian metric $h_{\Omega}$ on $-K_{\mX/\mathbb{C}}$ such that $-\sddbar\log h_{\Omega}=\Omega$. From the Step 1, $h_{\Omega}^{\otimes m}$ is just a pull back of the Fubini-Study metric on $\mathcal{O}_{\mathbb{P}^N}(1)$. We define
\begin{equation}\label{Dingf(t)}
 f(t)=F_{\Omega|_{\mX_t}}^0(\Phi|_{\mX_t})-\log\left(\int_{\mX_t} e^{-\Phi|_{\mX_t}}dV(h_{\Omega}|_{\mX_t})\right)={\rm I}+{\rm II}.
\end{equation}
Recall that, from Definition \ref{advol}, on each fibre $\mX_t$,
 \[
 dV(h_{\Omega}|_{\mX_t})=(\sqrt{-1})^{n}|v_t^*|_{h_{\Omega}}^{2/m} (v_t\wedge \bar{v}_t)^{1/m},
 \]
where $m$ is a positive integer and $v=(v_t)$ is a generator of $\mathcal{O}(-m K_{\mX/\mathbb{C}})$.  
\begin{remark}
The right hand side of \eqref{Dingf(t)} does not change if replace $\Phi$ by $\Phi+c(w)$ as long as $\Phi+c(w)$ is still an $\Omega$-plurisubharmonic function, where $c(w)$ is any function depending only on the variable $w$ on $B_1(0)$. In particular, we can replace $\Phi$ by $\Phi+c$ for any constant $c$ without changing $f(t)$. Also, by \eqref{transadapt}, we have that when $t=1$,
\begin{eqnarray*}
f(1)&=&F_{\omega_0}^0(\phi)-\log\left(\int_X e^{-\phi} dV(h_{\Omega}|_{\mX_1}) \right)\\
&=&F_{\omega_0}^0(\phi)-\log\left(\frac{1}{V}\int_X e^{-\phi}e^{h_{\omega_0}}\omega_0^n\right)-\log\left(\int_X dV(h_\Omega|_{\mX_1})\right)\\
&=&F_{\omega_0}(\omega_\phi)-\log\left(\int_X dV(h_\Omega|_{\mX_1=X})\right).
\end{eqnarray*}
By scaling $h_{\Omega}$ by a positive constant, we can assume that $\int_X dV(h_\Omega|_{\mX_1})=1$ so that $f(1)=F_{\omega_0}(\omega_\phi)$.
Similarly, we have that
\begin{eqnarray*}
f(0)&=&F_{\Omega|_{\mX_0}}^0(\Phi|_{\mX_0})-\log\left(\int_{\mX_0} e^{-\Phi|_{\mX_0}}dV(h_{\Omega}|_{\mX_0})\right)\\
&=&F^{\mX_0}_{\Omega|_{\mX_0}}((\Omega+\sddbar\Phi)|_{\mX_0})-C(h_\Omega),
\end{eqnarray*}
where we have denoted by $F^{\mX_0}_{\Omega|_{\mX_0}}$ the Ding energy defined on the central fibre satisfying $F^{\mX_0}_{\Omega|_{\mX_0}}(\Omega|_{\mX_0})=0$, and the constant $C(h_\Omega)$ denotes 
\[
C(h_\Omega)=\log\left(\int_{\mX_0}dV(h_{\Omega}|_{\mX_0})\right).
\]
Note that $C(h_\Omega)$ is independent of $\Phi$.
\end{remark}

Now since $\Phi$ satisfies the homogeneous Monge-Amp\`{e}re equation and by formula \eqref{MAenergy} $F_{\Omega|_{\mX_t}}^0$ is essentially the negative Bott-Chern integral for $c_1(L)^{n+1}$, we get
\[
\sqrt{-1}\partial\bar{\partial}{\rm I}=-\frac{1}{n+1}\int_{\mX_t}(\Omega+\sqrt{-1}\partial\bar{\partial}\Phi)^{n+1}-\Omega^{n+1}=\frac{1}{n+1}\int_{\mX_t}\Omega^{n+1}\ge 0, 
\]
where $\int_{\mX_t}$ denotes the integration along the fibre. The above identities/inequality hold in the sense of pluripotential theory. This can be proved using test functions and approximation argument as in the proof in \cite[Theorem 3.1]{ACKPZ}. Indeed, if $\Phi$ is smooth, we can choose a {\it{negative}} test function $\psi\in \mathcal{E}_0(B_1(0))\cap C(B_1(0))$ with zero boundary values (see \cite{ACKPZ} for the precise definition of
$\mathcal{E}_0(B_1(0))$)  and calculate:
\begin{eqnarray*}
&&\int_{B_1(0)}{\rm I}(\sddbar\psi)=\int_{B_1(0)}\int_{\mX_t}\mathfrak{M}(\sddbar\psi)=\int_{\mX}\mathfrak{M}(\sddbar\psi)\\
&=&\int_{\mX}(\sddbar \mathfrak{M})\psi=-\int_{\mX}\frac{1}{n+1}\sddbar\Phi \sum_{i=0}^n(\Omega+\sddbar\Phi)^{i}\wedge \Omega^{n-i}\cdot \psi\\
&=&-\frac{1}{n+1}\int_{\mX}\psi(
(\Omega+\sddbar\Phi)^{n+1}-\Omega^{n+1})\\
&=&\frac{1}{n+1}\int_{\mX}\psi\Omega^{n+1}=\frac{1}{n+1}\int_{B_1(0)}\psi \int_{\mX_t}\Omega^{n+1}
\le 0.
\end{eqnarray*}
where we have denoted
\[
\mathfrak{M}:=-\frac{1}{n+1}\sum_{i=0}^n \Phi (\Omega+\sddbar\Phi)^{i}\wedge \Omega^{n-i}.
\]
For general $\Phi$ one can use smooth approximations to prove that the above calculation still holds. 

Furthermore, we can verify the continuity of ${\rm I}(t)$ as $t\rightarrow 0$. The sketch of the proof of this fact using the convergence of Monge-Amp\`{e}re measures was given in the second revision of the submitted paper. Recently we noticed that the paper \cite[Proposition 2.19]{SSY} has given a detailed proof along similar line of thoughts. The idea is to decompose the estimate of difference into three parts, which is similar to the estimate \eqref{decompestimate} in the proof of Lemma \ref{IIcont}. In other words, choosing a small
neighborhood $\mathcal{W}(\delta)$ of $\mX^{sing}=\mX_0^{sing}$ (see section 4), we can estimate:
\begin{eqnarray}\label{destimate1}
&&\left|F^0_{\Omega|_{\mX_t}}(\Phi|_{\mX_t})-F^0_{\Omega|_{\mX_0}}(\Phi|_{\mX_0})\right|\\
&\le&\left|\int_{\mX_t\backslash\mathcal{W}(\delta)}\mathfrak{M}|_{\mX_t}-\int_{\mX_0\backslash\mathcal{W}(\delta)}\mathfrak{M}|_{\mX_0}\right|+\left|\int_{\mX_0\cap \mathcal{W}(\delta)}\mathfrak{M}|_{\mX_0}\right|+\left|\int_{\mX_t\cap\mathcal{W}(\delta)} \mathfrak{M}|_{\mX_t}\right|.\nonumber
\end{eqnarray}
For the second term on the right-hand-side of \eqref{destimate1}, we can estimate:
\begin{eqnarray*}
\left|\int_{\mX_0\cap \mathcal{W}(\delta)}\frak{M}|_{\mX_0}\right|\le\frac{\|\Phi\|_{L^{\infty}}}{n+1}\sum_{i=0}^n\int_{\mX_0\cap\mathcal{W}(\delta)}(\Omega+\sddbar\Phi)^i\wedge\Omega^{n-i}.\end{eqnarray*}
By choosing $\delta$ sufficiently small, each term in the above summation can be made arbitrarily small:
\begin{equation}\label{2small}
\int_{\mX_0\cap\mathcal{W}(\delta)}(\Omega+\sddbar\Phi)^i\wedge\Omega^{n-i}\le \epsilon \ll 1.
\end{equation}
This is because that $\mX_0^{sing}$ is a pluripolar set and carries no Monge-Amp\`{e}re mass with bounded potential. To estimate the third term on the right-hand-side of \eqref{destimate1}, we first notice that, because $\Phi$ is continuous on $\mX\backslash \mathcal{W}(\delta)$, essentially by the convergence of Monge-Amp\`{e}re measures 
(see e.g. \cite[Corollary 3.6, Chapter 3]{Dem} and \cite{Xing}) we have:
\begin{eqnarray}
\hskip -3mm\lim_{t\rightarrow 0}\int_{\mX_t\backslash\mathcal{W}(\delta)}(\Omega+\sddbar\Phi)^i\wedge\Omega^{n-i}&=&\int_{\mX_0\backslash\mathcal{W}(\delta)}(\Omega+\sddbar\Phi)^i\wedge\Omega^{n-i}\label{outest1}\\
\!\!&=&\!\!{\rm Vol}(\mX_0)-\int_{\mX_0\cap\mathcal{W}(\delta)}(\Omega+\sddbar\Phi)^i\wedge\Omega^{n-i}\label{outest2}\\
&\ge&{\rm Vol}(\mX_0)- \epsilon,\label{outest3} 
\end{eqnarray}
where ${\rm Vol}(\mX_0)$ is the same as $V$ in Definition \ref{volume}. The last inequality in \eqref{outest3} is because of \eqref{2small}.  As suggested by the referee, the equality in \eqref{outest1} can be argued as follows. Choose a partition of unity
$\{\rho_\alpha\}$ subordinate to a covering $\{\mathcal{U}_\alpha\}$ of $\mX\backslash \mathcal{W}(\delta)$. Note that the earlier exact choice of $\mathcal{W}(\delta)$ is not essential as long as $\mathcal{W}(\delta)$ is sufficiently small. So now by appropriately modifying $\mathcal{W}(\delta)$, each $\mathcal{U}_\alpha$ can be chosen using local holomorphic coordinates so that $\mathcal{U}_\alpha$ is biholomorphic to a polydisc $B_r(0)\times \mathbb{D}^n$ in $\mathbb{C}^{n+1}$ and the projection $\mathcal{U}_\alpha\rightarrow B_r(0)$ is a local product fibration over $B_r(0)$ for $r\ll 1$. 
Now we claim that 
\begin{equation}\label{uconv}
\lim_{t\rightarrow 0}\int_{\mathcal{U}_\alpha\cap \mX_t}\rho_\alpha (\Omega+\sddbar\Phi)^i\wedge \Omega^{n-i}=\int_{\mathcal{U}_\alpha\cap \mX_0}\rho_\alpha(\Omega+\sddbar\Phi)^i\wedge
\Omega^{n-i}.
\end{equation}
Then by patching together these local convergences using the property of the partition of unity, we get the convergence stated in \eqref{outest1}.

To see why \eqref{uconv} holds, let's first call $\rho_\alpha^{t}:=\rho_\alpha|_{\mathcal{U}_\alpha\cap \mX_t}$ and $\mathfrak{M}_i^{t}:=(\Omega+\sddbar\Phi)^i\wedge \Omega^{n-i}|_{\mX_t}$. Because we have chosen $\mathcal{U}_\alpha \cong B_r(0)\times \mathbb{D}^n$ where $\mathbb{D}^n\subset \mathbb{C}^n$ is a fixed polydisc, we can estimate the difference of the left-hand-side and right-hand-side of \eqref{uconv} as:
\begin{equation}\label{udiff}
\left|{\rm left}-{\rm right}\right|\le \left|\int_{\mathbb{D}^n}(\rho^{t}_\alpha-\rho^{0}_\alpha)\mathfrak{M}^t_i\right|+\left|\int_{\mathbb{D}^n}\rho_\alpha^0(\mathfrak{M}^t_i-\mathfrak{M}^0_i)\right|.
\end{equation}
The first term on the right-hand-side of \eqref{udiff} is estimated from above by:
\begin{equation}\label{udiff1}
\left\|\rho_\alpha^t-\rho_\alpha^0\right\|_{L^\infty}\int_{\mathbb{D}^n}\mathfrak{M}^t_i\le \|\rho^t_\alpha-\rho^0_\alpha\|_{L^\infty}{\rm Vol}(\mX_t)=\|\rho^t_\alpha-\rho_\alpha^0\|_{L^\infty}V,
\end{equation}
where $V=(2\pi)^n (-K_{\mX_t})^n$ (see Definition \ref{volume}) is independent of $t$. Noting that $\rho_\alpha\in C^{\infty}_0(\mathcal{U}_\alpha)$, when $t$ is sufficiently close to $0$, the last term in \eqref{udiff1} can indeed be made arbitrarily small. The second term on the right-hand-side of \eqref{udiff} converges to zero as $t\rightarrow 0$ because of the (weak) convergence of Monge-Amp\`{e}re measures mentioned above.

So by \eqref{outest1}-\eqref{outest3}, when $t$ is sufficiently close to $0$, we have:
\[
\int_{\mX_t\backslash \mathcal{W}(\delta)}(\Omega+\sddbar\Phi)^i\wedge\Omega^{n-i}\ge {\rm Vol}(\mX_0)-2\epsilon.
\]
For such $t$, we can thus estimate the third term in \eqref{destimate1} (cf. \cite[Proof of Proposition 2.19]{SSY}):
\begin{eqnarray*}
\left|\int_{\mX_t\cap \mathcal{W}(\delta)}\mathfrak{M}|_{\mX_t}\right|&\le& \frac{\|\Phi\|_{L^{\infty}}}{n+1}\sum_{i=0}^n\int_{\mX_t\cap \mathcal{W}(\delta)}(\Omega+\sddbar\Phi)^i\wedge \Omega^{n-i}\\
&=&\frac{\|\Phi\|_{L^{\infty}}}{n+1}\sum_{i=0}^n \left({\rm Vol}(\mX_t)-\int_{\mX_t\backslash\mathcal{W}(\delta)}(\Omega+\sddbar\Phi)^i\wedge\Omega^{n-i}\right)\\
&\le &\frac{\|\Phi\|_{L^\infty}}{n+1}\sum_{n=0}^n\left({\rm Vol}(\mX_t)-({\rm Vol}(\mX_0)-2\epsilon)\right)\\
&=&2\epsilon \|\Phi\|_{L^\infty}.
\end{eqnarray*}
Here we have used that fact that ${\rm Vol}(\mX_t)=(2\pi)^n(-K_{\mX_t})^n=(2\pi)^n(-K_{\mX_0})^n={\rm Vol}(\mX_0)$.
Once we have chosen the $\delta>0$ and $|t|$ sufficiently small such that the estimates for the second term and the third term in the right-hand-side of \eqref{destimate1} are sufficiently small as above, the first term in \eqref{destimate1} can also be made arbitrarily small as long as $t$ is sufficiently close to $0$, again by the convergence of Monge-Amp\`{e}re measures away from $\mathcal{W}(\delta)$ and the argument involving the partition of unity as above.

\begin{remark}
One referee pointed out that 
Berman \cite{Berm2} has already proved that ${\rm I}(t)$ is continuous on $B_1(0)^*$ and is bounded on $B_1(0)$, and that his proof can also be modified to show that ${\rm I}(t)$ is continuous at $t=0$. Berman's proof used the fact that the functional $F^0_{\omega}(\phi)$ can be written as a difference of
metrics on the corresponding Deligne pairing which is continuous by a result of Moriwaki. The use of Moriwaki's result in this context was first observed in \cite{PRS}.
\end{remark}

Next we want to prove that ${\rm II}$ is also subharmonic and continuous at $t=0$. This part is more difficult and we need Berndtsson's subharmonicity result that we now recall. 
Assume that $\mathcal{H}$ is any possibly singular Hermitian metric on a relative ample line bundle $\mathcal{L}\rightarrow \mX$. The relative Bergman kernel metric on $\mathcal{O}_{\mX}(K_{\mX/\mathbb{C}}+\mL)$ is defined by:
\begin{equation}\label{BKmetric}
|s|_{BK}^2=\frac{|s|^2}{\sum_{i}|s_i|^2}
\end{equation}
where $\{s_i\}$ is an $L^2$-orthonormal basis of $H^0(\mX_t, K_{\mX_t}+\mL|_{\mX_t})$ under the $L^2$-inner product induced by $\mathcal{H}$. Berndtsson's fundamental result in \cite{Bern1} (see also \cite[Theorem 0.1]{BePa}) says that, if the metric $\mathcal{H}$ on $\mL\rightarrow \mX$ has positive curvature current: $-\sddbar\log \mathcal{H}\ge 0$, then $|s|^2_{BK}$ has a positive curvature on $\mX^{*}$ where the projection $\pi: \mX^{*}\rightarrow \mathbb{C}^*$ is a smooth fibration. In other words, under the above assumption, we have:
\[
-\sddbar\log|\cdot|_{BK}^2\ge 0 \mbox{ on } \mX^*.
\] 
Note that a priorly we don't know what happens on the whole $\mX$.

Berndtsson's result was applied in the current set-up in the work of \cite{Bern} and \cite[Lemma 6.5]{BBGZ}. To do this, we write $\mathcal{O}_{\mX}$ as $K_{\mX}+(-K_{\mX})$ and take the Hermtian metric to be $\mathcal{H}=h_{\Omega} e^{-\Phi}$. Note that $H^0(\mX_t, \mathcal{O}_{\mX_t})\cong \mathbb{C}$ is a 1-dimensional vector space spanned by the constant section ${\bf 1}$. The $L^2$-norm of the constant section ${\bf 1}$ is equal to:
\begin{equation}\label{1L2norm}
\|{\bf 1}\|_{L^2}^2=(\sqrt{-1})^n\int_{\mX_t} |v_t^*|_{h_\Omega}^{2/m} e^{-\Phi} (v_t\wedge \bar{v}_t)^{1/m}=\int_{\mX_t} e^{-\Phi|_{\mX_t}}dV(h_{\Omega|_{\mX_t}}).
\end{equation}
So by \eqref{BKmetric} the relative Bergman metric on $\mathcal{O}_{\mX}$ is defined for any holomorphic function $h\in\mathcal{O}_{\mX}$ as:
\[
|h|_{BK}^2=\frac{|h|^2}{|{\bf 1}|^2/\|{\bf 1}\|_{L^2}^2}=|h|^2 \|{\bf1}\|_{L^2}^2.
\]
So by \eqref{1L2norm} we see that:
\[
{\rm II}(t)=-\log\left(\int_{\mX_t}e^{-\Phi|_{\mX_t}} dV(h_{\Omega|_{\mX_t}})\right)=-\log |1|_{BK}^2.
\]
In particular, the right-hand side is a pull-back function from the base.
Now the Hermitian metric $\mathcal{H}=h_{\Omega} e^{-\Phi}$ on $-K_{\mX}$ satisfies
\begin{equation}\label{positivecurvature}
-\sqrt{-1}\partial\bar{\partial}\log\left(e^{-\Phi}dV(h_{\Omega})\right)=\Omega+\sqrt{-1}\partial\bar{\partial}\Phi\ge 0.
\end{equation}
So Berndtsson's result (\cite[Lemma 6.5]{BBGZ}) implies that ${\rm II}(t)$ is subharmonic on
$B_1(0)^*:=B_1(0)\backslash \{0\}$. 
Now we need the following Lemma which is equivalent to Lemma \ref{contlem} and will be proved in the next section:
\begin{lem}[Lemma \ref{contlem}]\label{IIcont}
The part ${\rm II}(t)$ is continuous as a function of $t$. In other words, we have the convergence:
\begin{equation}\label{globalconvergence1}
\lim_{t\rightarrow 0}\int_{\mX_t}e^{-\Phi|_{\mX_t}} dV(h_{\Omega}|_{\mX_t})=\int_{\mX_0} e^{-\Phi} dV(h_{\Omega}).
\end{equation}
\end{lem}
Assuming Lemma \ref{IIcont}, by standard potential theory, ${\rm II}(t)$ is subharmonic on the whole $B_1(0)$: $\Delta {\rm II}\ge 0$, because it coincides with its subharmonic extension. So by the maximal principle and $S^1$-symmetry, we have
\[
\max_{t\in \partial B_1(0)}{\rm II}(t)={\rm II}(1)\ge {\rm II}(0).
\]
Combining the above discussion, we see that $f(t)$ is subharmonic on $B_1(0)$ and so by the maximal principle, 
\begin{equation}\label{lowerineq1}
f(1)=F_{\omega_0}(\omega_\phi)\ge f(0)=F^{\mX_0}_{\Omega|_{\mX_0}}(\Omega_{\Phi}|_{\mX_0})-C(h_\Omega).
\end{equation}

\item[{\bf Step 4:}]
 Because we assume that $\mX_0$ has a (weak) K\"{a}hler-Einstein $\omega_{KE}^{\mX_0}$, $\omega_{KE}^{\mX_0}$ obtains the minimum of Ding-energy functional: 
  \begin{equation}\label{lowerineq2}
 F_{\Omega|_{\mX_0}}^{\mX_0}(\Omega_\Phi|_{\mX_0})\ge F^{\mX_0}_{\Omega|_{\mX_0}}(\omega_{KE}^{\mX_0}).
 \end{equation}
This was proved in \cite{DT93} (see also \cite{Ti99}) in the smooth case, and was generalized to the $\mathbb{Q}$-Fano case in \cite{BBEGZ}. Also see discussions in \cite{LiSu}. 
 
\end{enumerate}
Now Theorem \ref{bdDing} follows by combining inequalities \eqref{lowerineq1} and \eqref{lowerineq2}.
 
\end{proof}
\begin{remark}\label{logmod}
Here we point out the necessary modifications of the above arguments for the logarithmic case, i.e. for the pair $(\mX,\alpha\mD)$. Denote $\beta=1-\alpha$. Step 1 and Step 2 stay the same. In Step 3, we need to consider the (relative) log volume form:
\begin{equation}\label{logvolumeform}
dV\!\!\left((\mX_t,(1-\beta) \mD_t); h_{\Omega}e^{-\Phi}|_{\mX_t}\right)\!=\!\left.\frac{e^{-\Phi}}{(|\mathcal{S}|_{h_\Omega}^2e^{-\lambda\Phi})^{1-\beta}}\right|_{\mX_t}\!\!dV(h_{\Omega}|_{\mX_t})\!=\!\left.\frac{e^{-r(\beta)\Phi}}{|\mathcal{S}|^{2(1-\beta)}_{h_\Omega}}\right|_{\mX_t}\!\!dV(h_\Omega|_{\mX_t}).
\end{equation}
where $r(\beta)=1-(1-\beta)\lambda$, $\mathcal{S}$ is the holomorphic defining section of $\mD\subset |-\lambda K_{\mX/\mathbb{C}}|$ and $|\cdot |_{h_\Omega}^2$ is the naturally induced Hermitian metric on $-\lambda K_{\mX/\mathbb{C}}$ by the hermitian metric 
$h_{\Omega}$ from the beginning of Step 3. Then \eqref{positivecurvature} becomes
\begin{eqnarray*}
-\sqrt{-1}\partial\bar{\partial}\log\left(e^{-r(\beta)\Phi}\frac{dV(h_{\Omega})}{|\mathcal{S}|_{h_\Omega}^{2(1-\beta)}}\right)&=&{\scriptstyle r(\beta)\sddbar\Phi-\sddbar\log dV(h_{\Omega})+(1-\beta)\sddbar\log |\mathcal{S}|_{h_\Omega}^2}\\
&=&\Omega+r(\beta)\sqrt{-1}\partial\bar{\partial}\Phi+(1-\beta)(-\lambda \Omega+\{\mathcal{S}=0\})\\
&=&(1-(1-\beta)\lambda)(\Omega+\sddbar\Phi)+(1-\beta)\{\mathcal{S}=0\}.
\end{eqnarray*}
\end{remark}
Note that we can assume $r(\beta)\ge 0$. Otherwise, as has been pointed out in the proof of Proposition \ref{PDEcont}, the $\alpha$-invariant or log-$\alpha$-invariant (\cite{Berm}, \cite{LiSu}) is sufficient to prove the lower boundedness (even the properness) of (log-)Mabuchi energy and hence the
lower boundedness of (log-)Ding-energy. See \cite{BBEGZ} and \cite{LiSu}.  When $r(\beta)\ge 0$, the right-hand-side is a positive current and we can again use Berndtsson's subharmoncity result recalled earlier plus the Remark \ref{logmodcont}. Finally, the result in Step 4 in the log setting has been proved in \cite{BBEGZ}.

\section{Proof of Lemma \ref{IIcont}}\label{sectionlemma}

First note that by the expression of ${\rm II}$ in \eqref{Dingf(t)}, ${\rm II}(t)$ is continuous on $B_1(0)^*$ since $\Phi$ is continuous on $\mX^*:=\mX\backslash \mX_0$ and $dV(h)$ is a smooth volume form on $\mX^*$. Recall that we want to prove the convergence:
\begin{equation}\label{globalconvergence}
\lim_{t\rightarrow 0}\int_{\mX_t}e^{-\Phi|_{\mX_t}} dV(h_{\Omega}|_{\mX_t})=\int_{\mX_0} e^{-\Phi|_{\mX_0}} dV(h_{\Omega}|_{\mX_0}).
\end{equation}
Again the finiteness on the right hand side follows from $\mX_0$ being klt. 
For convenience, we assume $\mX$ has been embedded into $\mathbb{P}^N\times\mathbb{C}$, and we denote by $\mathcal{B}(\mX^{sing},\delta)$ the tube of radius $\delta$ around the closed set $\mX^{sing}$ in the metric product $\mathbb{P}^N\times\mathbb{C}$. We want to reduce proving global convergence \eqref{globalconvergence} to proving local volume convergence. More precisely, we will construct a family of neighborhoods $\{\mathcal{W}(\delta)\}_{\delta\in (0,1)}$ of $\mX^{sing}$ in $\mX$ in analytic topology, such that the following conditions are satisfied:
\begin{enumerate}
\item
If $\delta_1<\delta_2$, then $\overline{\mathcal{W}({\delta_1})}$ is compact in $\mathcal{W}({\delta_2})$.
\item There exists a fixed $\Lambda>1$ such that $\scriptstyle{\mathcal{W}({\delta})\subset \mathcal{B}(\mX^{sing},\Lambda\delta)}$. As a consequence, $\scriptstyle{\bigcap_{\delta>0}\mathcal{W}(\delta)=\mX^{sing}}$.
\item Local volume convergence holds for any $\mW(\delta)$:
\begin{equation}\label{locvolconv}
\lim_{t\rightarrow 0}\int_{\mX_t\cap\mW(\delta)} dV(h_{\Omega}|_{\mX_t})=\int_{\mX_0\cap\mW(\delta)}dV(h_{\Omega}|_{\mX_0}).
\end{equation}
\end{enumerate}
Assuming that we have constructed such a family of $\mW(\delta)$, we claim that we can prove \eqref{globalconvergence}. To see this, we estimate the difference of integrals in \eqref{globalconvergence}
\begin{eqnarray}\label{decompestimate}
&&\scriptstyle{\left|\int_{\mX_t}e^{-\Phi} dV(h_{\Omega})-\int_{\mX_0}e^{-\Phi}dV(h_{\Omega})\right|}\\
&\le& \scriptstyle{\left|\int_{\mX_t\backslash\mathcal{W}(\delta)}e^{-\Phi}dV(h_{\Omega})-\int_{\mX_0\backslash\mathcal{W}(\delta)}e^{-\Phi}dV(h_{\Omega})\right|+\left|\int_{\mX_t\cap\mathcal{W}(\delta)} e^{-\Phi}dV(h_{\Omega})-\int_{\mX_0\cap \mathcal{W}(\delta)}e^{-\Phi}dV(h_{\Omega})\right|}\nonumber\\
&\le& \scriptstyle{\left|\int_{\mX_t\backslash\mathcal{W}(\delta)}e^{-\Phi}dV(h_{\Omega})-\int_{\mX_0\backslash\mathcal{W}(\delta)}e^{-\Phi}dV(h_{\Omega})\right|+e^{\|\Phi\|_{L^{\infty}}}\left(\int_{\mX_t\cap\mathcal{W}(\delta)}dV(h_{\Omega})+\int_{\mX_0\cap \mathcal{W}(\delta)}dV(h_{\Omega})\right)}\nonumber.
\end{eqnarray}
For fixed small $\delta$,  the first term is small when $t$ is sufficiently small because $\Phi$ is uniformly continuous on $\mX|_{B_1(0)}\backslash\mathcal{W}_{\delta}$ by the $C^{1,\alpha}$-regularity result of Phong-Sturm \cite{PS09} recalled in Step 2 of Section \ref{sectionthm} and $dV(h_{\Omega})$ is a smooth volume form on $\mX\backslash\mathcal{W}_{\delta}$. For the terms in the bracket, first note that $\|\Phi\|_{L^{\infty}}$ is finite since $\Phi$ is uniformly bounded on $\mX|_{B_1(0)}$ by the discussion above in Step 2. Secondly, the volume of $\mW(\delta)\cap\mX_0$ is negligible when $\delta$ is small because $dV(h_{\Omega})$ is an $L^p$-volume form on $\mX_0$ for some $p>1$ by the klt property of $\mX_0$. In other words, 
\begin{equation}\label{negligible}
\lim_{\delta\rightarrow 0}\int_{\mX_0\cap\mW(\delta)}dV(h_{\Omega}|_{\mX_0})=\int_{\mX^{sing}_0} dV(h_\Omega|_{\mX_0})=0.
\end{equation}
So the convergence identities \eqref{locvolconv} and \eqref{negligible} imply that the two volume integrals in the last bracket can be arbitrarily small when $t$ and $\delta$ are sufficiently small. So the claim follows.


Now we construct $\mW(\delta)$ which is a neighborhood of $\mX^{sing}$. 
We choose a log resolution of the pair $(\mX, \mX_0)$: $\mu: \tilde{\mX}\rightarrow \mX$ and denote $\tilde{\pi}=\pi\circ\mu$. So we have the commutative diagram:
\begin{equation}\label{commut}
\xymatrix{
  \mX_0' \ar[dr]_{\mu|_{\mX'_0}} \ar[r]^{\subset} &\tilde{\mX}_0 \ar[d]^{} \ar@{^{(}->}[r]_{} &
                    \tilde{\mX} \ar[d]_{\mu} \ar@/^1.5pc/[dd]^{\tilde{\pi}} \\
  & \mX_0 \ar[d]^{} \ar@{^{(}->}[r]_{} & \mX  \ar[d]_{\pi}          \\
  & \{0\}\ar@{^{(}->}[r]^{}     & \mathbb{C}        }
\end{equation}
Note that in the special degeneration $\mX$, $\mX_0=\{t=0\}$ is a reduced fibre. Then we have
\begin{equation}\label{tilde0}
\tilde{\mX}_0=\mu^*\mX_0=\mu^*\pi^*(\{t=0\})=\tilde{\pi}^*(\{t=0\})=\mX_0'+\sum_{i=1}^{K}a_i E_i, \mbox{ with } \mathbb{Z}\ni a_i>0,
\end{equation}
where $\mX_0'$ is the strict transform of $\mX_0$ under $\mu^{-1}$. $E_i$'s are exceptional divisors. The divisors $\mX_0'$ and $E_i$'s have simple normal crossings. For any $\tilde{x}\in \tilde{\mX}_0=\mX_0'\bigcup \cup_{i=1}^{K} E_i$, we will construct a neighborhood $\tilde{\mU}(\tilde{x},\delta)$ in $\tilde{\mX}$ using local normal crossing coordinates. The key property satisfied by $\tilde{\mU}(\tilde{x},\delta)$ is what we will call the {\em local volume convergence} property, which is the following equality corresponding to \eqref{locvolconv}:
\begin{equation}\label{localconvprop}
\lim_{t\rightarrow 0}\int_{\tilde{\mX}_t\cap \tilde{\mU}(\tilde{x},\delta)}\mu^*(v\wedge \bar{v})=\int_{\mX_0'\cap\tilde{\mU}(\tilde{x},\delta)}\mu^*(v\wedge\bar{v}).
\end{equation}
Note that if $\mX_0'\cap\tilde{\mU}(\tilde{x},\delta)$ is an empty set, then the right hand side is equal to zero. So \eqref{localconvprop} essentially says that the limit of the pull-back of volume integrals is concentrated on $\mX'_0$.

Assuming we have achieved this, we just define our desired $\mW(\delta)$ on $\mX$ to be
\begin{equation}\label{Xsingnbhd}
\mW(\delta)=\bigcup_{\tilde{x}\in \cup_{i=1}^{K}E_i}\mu(\tilde{\mU}(\tilde{x},\delta)).
\end{equation}
Note that on the right hand side, we can choose a finite sub covering by the compactness of $\tilde{\mX}_0$. By the construction of $\tilde{\mU}(\tilde{x},\delta)$, it is then easy to verify that $\mW(\delta)$ satisfies our requirements.

Before we construct the neighborhoods and calculate, we need to recall an important result from complex algebraic geometry called {\it inversion of adjunction}. For this, we consider the usual formula defining the discrepancy $b_i=a(E_i, \mX, \mX_0)$ of $E_i$ with respect $(\mX,\mX_0)$ (\cite[Definition 2.25]{KM98}):
\begin{equation}\label{logres}
K_{\tilde{\mX}/\mathbb{C}}+\mX_0'=\mu^*(K_{\mX/\mathbb{C}}+\mX_0)-\sum_{i=1}^K b_i E_i.
\end{equation}
Because $\mX_0$ is klt, the inversion of adjunction \cite[Theorem 5.50]{KM98} says that $(\mX, \mX_0)$ is plt, which implies in particular $b_i<1$. 
This will be the key fact for us to estimate the integrals. See the discussion in Remark \ref{remnbhd}.
Combining equation \eqref{tilde0} and \eqref{logres}, we get
\begin{equation}\label{pullvol}
\mu^*K_{\mX/\mathbb{C}}=K_{\tilde{\mX}/\mathbb{C}}-\sum_{i=1}^K(a_i-b_i)E_i.
\end{equation}
Also note that by adjunction formula, from \eqref{logres} we have
\begin{equation}\label{adjunction}
K_{\mX_0'}=(\mu|_{\mX_0'})^*K_{\mX_0}-\sum_{i=1}^{K}b_i E_i|_{\mX_0'}.
\end{equation}
with $b_i<1$, which is saying exactly that $\mX_0$ is klt since $\mu|_{\mX'_0}: \mX'_0\rightarrow \mX_0$ is a resolution of singularities.

\begin{remark}\label{remnbhd}
The following calculations are separated into two cases depending on whether $\tilde{x}$ is contained in $\mX_0'$ (the strict transform of $\mX_0$ under resolution) or not. In both cases, there are essentially two steps. For case 1 when $\tilde{x}\in\mX_0'$, first we reduce the local volume integral on $\mU_{t}(\tilde{x},\delta)$ to the integral on the image of the projection of $\mU_{t}(\tilde{x},\delta)$ to $\mX_0'$. Secondly, as $t\rightarrow 0$, we show that these integrals converge to the volume integral on $\mX'_0\cap \mU(\tilde{x},\delta)$ because the domains of integrals converge and the positive integrands also converge under domination. For case 2 when $\tilde{x}\not\in\mX_0'$, we first project the local volume integral to an integral on some appropriately chosen exceptional divisor. Secondly, as $t\rightarrow 0$, we estimate the volume integrals to show that they actually converge to zero. In the estimates in both case, we use essentially the property that $(\mX,\mX_0)$ has plt singularities, i.e. discrepancies of the exceptional divisors over $\mX$ are bigger than $-1$. This important property is called the inversion of adjunction from birational algebraic geometry. 

By this argument we see that the choice of $\tilde{\mU}(\tilde{x},\delta)$ is very flexible. Actually any small simply-connected neighborhood $\tilde{\mU}(\tilde{x},\delta)$ of $\tilde{x}$ satisfies the local convergence property in \eqref{localconvprop}. For example, we may well choose the balls:
\[
\tilde{\mU}(\tilde{x}, \delta)=\left\{\sum_{i=0}^{n}|w_i|^2\le \delta^2\right\},
\]
under local coordinates adapted to the simple normal crossing singularities.
However for the simplicity of calculations, we will choose a small polydisk around $\tilde{x}$ in \eqref{boxnbhd}. 
The above ideas of the calculation are illustrated by the figure 1. 
\begin{figure}[h]\label{presolution}
\includegraphics[height=5cm]{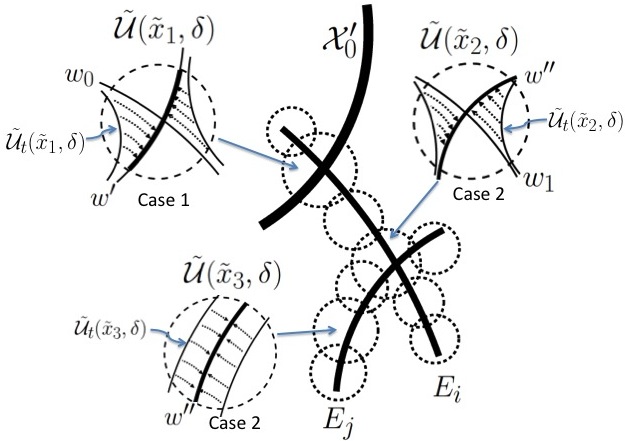}
\hskip -1mm
\includegraphics[height=4.5cm]{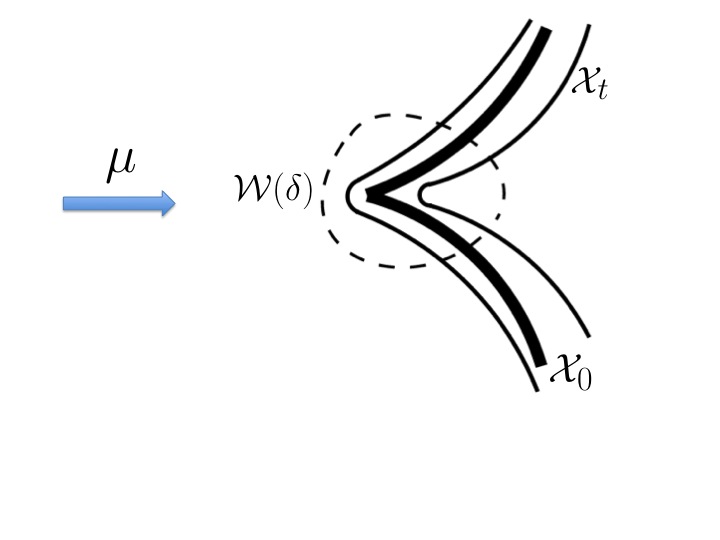}
\label{resolution}
\caption{$\mu: \tilde{\mX}\longrightarrow\mX$}
\end{figure}
\end{remark}
Now we can start to calculate the volume integrals on small neighborhoods of any $\tilde{x}\in \mX'_0\bigcup \cup_{i=1}^{K}E_i$. The following arguments of calculations are basically detailed explanations of figure 1.
\begin{enumerate}
\item[Case 1:] $\tilde{x}\in \mX_0'$. Without loss of generality, we can assume
\[
\tilde{x}\in \mX_0'\bigcap \cap_{i=1}^{N_{\tilde{x}}}E_i.
\]
Choose a coordinate chart $w=\{w_0,w_1,\dots,w_n\}=\{w_0, w'\}$ such that  $w_i(\tilde{x})=0$ for $i=0,1,\dots, n$, and locally $\mX_0'=\{w_0=0\}$ and $E_j=\{w_j=0\}$ for $1\le j\le n$.  By \eqref{tilde0} we can assume that the map $\tilde{\pi}$ (see the diagram \eqref{commut}) is given by 
\begin{equation}\label{monoproj}
t=w_0\prod_{i=1}^{N_{\tilde{x}}} w_i^{a_i}.
\end{equation}
with $a_i\ge 1$ for $i=1,\dots, N_{\tilde{x}}$. These can be done because $\mX'_0\bigcup \cup_{i=1}^{N_{\tilde{x}}}E_i$'s has simple normal crossings. Note that, in particular, $w'=\{w_1,\dots, w_n\}$ is a local coordinate system on $\mX'_0$ near $\tilde{x}$. We now consider the region:
\begin{equation}\label{boxnbhd}
\tilde{\mU}(\tilde{x},\delta)=\{|w_j|\le \delta, j=0,\dots, n\}.
\end{equation}
Note that, by the previous remark \eqref{remnbhd}, this specific choice is to make calculations simple. Any comparable choice of $\tilde{\mU}(\tilde{x},\delta)$ can serve our purpose. 

When $t\neq 0$, by \eqref{monoproj} we can choose $\{w_1,\dots, w_n\}$ as the local coordinate system on the (local) fibre $\tilde{\mU}_t(\tilde{x},\delta)=\tilde{\mU}(\tilde{x},\delta)\cap \tilde{\mX}_t$: 
\begin{equation}\label{w0tw'}
w_0=w_0(t,w_1,\dots, w_n)=w_0(t,w')=\frac{t}{\prod_{i=1}^{N_{\tilde{x}}}w_i^{a_i}},
\end{equation}
so that  for $t\neq 0$ we have:
\[
\textstyle{
\tilde{\mU}_t(\tilde{x},\delta)=\left\{\left(\frac{t}{\prod_{i=1}^{N_{\tilde{x}}}w_i^{a_i}},w'\right)\in\!\mathbb{C}^{n+1};\! |w_j|\le \delta, j=1, \dots, n,\! \mbox{ and } \!\left|\frac{t}{\prod_{i=1}^{N_{\tilde{x}}}w_i^{a_i}}\right|\le \delta\right\}.
}
\]
So $\tilde{\mU}_t(\tilde{x}, \delta)$ is biholomorphic to the following region in the $w'$-space via projection:
\[
\mathcal{V}'_t(\delta)=\left\{w'=(w_1,\dots, w_n)\in\mathbb{C}^n;\quad |w_j|\le \delta, j=1,\dots, n, \; \prod_{i=1}^{N_{\tilde{x}}}|w_i|^{a_i}\ge |t|/\delta\right\}.
\]
Note that $\{\mathcal{V}'_t(\delta)\}$ is an increasing sequence of sets on the $w'$-space with respect to the variable $t$. The limit is:
\[
\lim_{t\rightarrow0}\mathcal{V}'_t(\delta)=\{w'=(w_1,\dots, w_n)\in\mathbb{C}^n; |w_j|\le \delta, j=1,\dots, n\}=:\mathcal{V}_0'(\delta).
\]
The point here is that $w'=\{w_1,\dots, w_n\}$ serves as local coordinate system on $\mU_t(\tilde{x},\delta)$ for $t\neq 0$ and also on $\mU(\tilde{x},\delta)\cap \mX'_0$, so that we can use it to check that $\tilde{\mU}(\tilde{x},\delta)$ satisfies the local volume convergence condition as follows. To be clear, we will translate the formula \eqref{logres}-\eqref{adjunction} into analytic forms. We start with the equation \eqref{pullvol} that gives:
\begin{equation}\label{volume1}
\mu^* (v^{1/m})=g(w)\prod_{i=1}^{N_{\tilde{x}}} w_i^{a_i-b_i}\; (dw_0\wedge dw'\otimes \partial_t)\quad (\Longleftrightarrow \eqref{pullvol}),
\end{equation}
where $g(w)$ is a nowhere vanishing holomorphic function of $w$, and $dw'=dw_1\wedge\dots\wedge dw_n$.  
\begin{remark}\label{fracpower}
Precisely speaking, this formula should be interpreted as choosing a branch of the $1/m$-roots of following formula:
\[
\mu^*v=\tilde{g}(w)\prod_{i=1}^{N_{\tilde{x}}}w_i^{ma_i-m b_i} (dw_0\wedge dw')^{\otimes m}\otimes (\partial_t)^{\otimes m}.
\]
where $v$ is a generator of $\mathcal{O}_{\mX}(-mK_{\mX/\mathbb{C}})$ and $m b_i$ are integers. In the following, we will always implicitly assume that we have made this choice when we deal with fractional powers.
\end{remark}
From \eqref{volume1} and \eqref{w0tw'} we get
\begin{equation}
g(w)\frac{dw_0\wedge dw'\otimes\partial_t}{w_0}=\mu^*\left(\frac{v^{1/m}}{t}\right)\prod_{i=1}^{N_{\tilde{x}}}w_i^{b_i}. \quad (\Longleftrightarrow \eqref{logres}).
\end{equation}
Taking residues, we get the analytic formula corresponding to equation \eqref{adjunction}:
\begin{equation}\label{analyticadjunction}
(\mu|_{\mX_0'})^* (v_0^{1/m})=g(0,w')\prod_{i=1}^{N_{\tilde{x}}} w_i^{-b_i} dw'. \quad (\Longleftrightarrow \eqref{adjunction})
\end{equation} 
When $t\neq 0$, we can compute the integral on $\tilde{\mU}_t:=\tilde{\mU}_t(\tilde{x},\delta) \subset \tilde{\mX}_t$ using local coordinates $\{w_1,\dots, w_n\}$ on it. 
The local volume form in \eqref{volume1} restricted on $\tilde{\mU}_t$ becomes
\begin{eqnarray*}
\prod_{i=1}^{N_{\tilde{x}}}w_i^{a_i-b_i}dw\otimes\partial_t|_{\tilde{\mU}_t}&=&\prod_{i=1}^{N_{\tilde{x}}}w_i^{a_i-b_i}dw_0\wedge dw_1\wedge\cdots \wedge dw_n\otimes\partial_t|_{\tilde{\mU}_t}\\
&=&\prod_{i=1}^{N_{\tilde{x}}}w_i^{a_i-b_i}\frac{dt}{\prod_{i=1}^{N_{\tilde{x}}}w_i^{a_i}}\wedge dw_1\wedge\cdots\wedge dw_n\otimes\partial_t|_{\tilde{\mU}_t}\\
&=&\bigwedge_{i=1}^{N_{\tilde{x}}}w_i^{-b_i}dw_i\wedge\bigwedge_{j=N_{\tilde{x}}+1}^{n} dw_j.
\end{eqnarray*}
So by \eqref{volume1}, we have that: 
\[
\mu^*(v\wedge\bar{v})^{1/m}=|g(w_0(t,w'),w')|^2\bigwedge_{i=1}^{N_{\tilde{x}}}|w_i|^{-2b_i}dw_i\wedge d\bar{w}_i\wedge\bigwedge_{j=N_{\tilde{x}}+1}^n dw_j\wedge d\bar{w}_j.
\]
By \eqref{w0tw'} $\lim_{t\rightarrow 0}w_0(t,w')=0$. So we see that for any $w'\in\mathcal{V}'_t(\delta)$ for some $t$, we have the point-wise convergence:
\begin{eqnarray*}
\lim_{t\rightarrow 0}\mu^*(v\wedge\bar{v})^{1/m}&=&|g(0,w')|^2\bigwedge_{i=1}^{N_{\tilde{x}}}|w_i|^{-2b_i}dw_i\wedge d\bar{w}_i\wedge\bigwedge_{j=N_{\tilde{x}}+1}^{n}dw_j\wedge d\bar{w}_j\\
&=&\mu|_{\mX_0'}^* (v_0\wedge\bar{v}_0)^{1/m}.
\end{eqnarray*}
By \eqref{analyticadjunction}, the second equality is just the analytic form of formula \eqref{adjunction}. 
Now we can check the convergence in \eqref{localconvprop}:
\begin{eqnarray}\label{case1limit}
&&\lim_{t\rightarrow 0}\int_{\tilde{\mathcal{U}}_t(\tilde{x},\delta)}\mu^*(v\wedge\bar{v})^{1/m}=\lim_{t\rightarrow 0}\int_{\mathcal{V}'_t(\delta)} \frac{|g(w_0(t,w'),w')|^2}{\prod_{i=1}^{N_{\tilde{x}}}|w_i|^{2b_i}}dw'\wedge d\overline{w'}\\
&=&\lim_{t\rightarrow 0}\int_{\mathcal{V}'_0(\delta)}\chi(t,\delta)\frac{|g(w_0(t,w'),w')|^2}{\prod_{i=1}^{N_{\tilde{x}}}|w_i|^{2b_i}}dw'\wedge d\overline{w'}\nonumber\\
&=&\int_{\mV'_0(\delta)}\lim_{t\rightarrow 0}\left( \chi(t,\delta)\frac{|g(w_0(t,w'),w')|^2}{\prod_{i=1}^{N_{\tilde{x}}}|w_i|^{2b_i}}\right)dw'\wedge d\overline{w'}\nonumber\\
&=&\int_{\mV'_0(\delta)}\frac{|g(0,w')|^2}{\prod_{i=1}^{N_{\tilde{x}}}|w_i|^{2b_i}}dw'\wedge d\overline{w'}=\int_{\mX'_0\cap\tilde{\mU}(\tilde{x},\delta)}\mu|_{\mX'_0}^*(v_0\wedge\bar{v}_0)^{1/m}.\nonumber
\end{eqnarray}
Here $\chi(t,\delta)$ is the characteristic function of the inclusion $\mV'_{t}(\delta)\subset\mV'_0(\delta)$. 
To see that the 3rd identity holds, we first note that because $g(w_0,w')$ is a holomorphic function on $\tilde{\mU}(\tilde{x},\delta)$, there exists an upper bound $|g(w_0, w')|\le M$. So we can estimate:
\[
\left|\chi(t,\delta)\frac{|g(w_0(t,w'),w')|^2}{\prod_{i=1}^{N_{\tilde{x}}}|w_i|^{2b_i}}\right|\le \frac{M}{\prod_{i=1}^{N_{\tilde{x}}}|w_i|^{2(b_i+\epsilon)}},
\]
where $\epsilon$ is small such that $b_i+\epsilon<1$. So the right-hand-side is integrable and, by dominated convergence theorem, we get the 3rd identity.
\item [Case 2:]
$\tilde{x} \not\in\mX_0'$. Without loss of generality, we assume 
\[
\tilde{x}\in \bigcap_{i=1}^{N_{\tilde{x}}}E_i.
\]
Similarly as before, we can choose the coordinate chart $w'=\{w_1,\dots, w_{n+1}\}$ such that $E_i=\{w_i=0\}$ for $1\le i\le N_{\tilde{x}}$ and the map $\tilde{\pi}$ is defined by the formula
\[
t=\prod_{i=1}^{N_{\tilde{x}}} w_i^{a_i}=\prod_{i=1}^{N_{\tilde{x}}} y_i,
\]
where for convenience we introduce $y_i=w_i^{a_i}$ with $\mathbb{Z}\ni a_i\ge 1$. So on $\tilde{\mX}_t$, we have 
\begin{equation}\label{y1}
y_1=w_1^{a_1}=\frac{t}{\prod_{i=2}^{N_{\tilde{x}}}w_i^{a_i}}=\frac{t}{\prod_{i=2}^{N_{\tilde{x}}}y_i}.
\end{equation}
Similarly as before, we consider the region:
\[
\tilde{\mU}(\tilde{x},\delta)=\{(w_1,\dots, w_{n+1}); |w_j|\le\delta, j=1,\dots, n+1\}; 
\]
So when $t\neq 0$, we have
\[
\tilde{\mU}_t(\tilde{x},\delta)=\{w'=(w_1,\dots, w_{n+1})\in \mathbb{C}^{n+1}; |w_j|\le \delta, j=1,\dots,n+1, \prod_{i=1}^{N_{\tilde{x}}}w_i^{a_i}=t\}.
\]
Denote $w''=\{ w_2,\dots, w_{n+1}\}$. Under the projection to the $w''$-space. $\tilde{\mU}_t(\tilde{x},\delta)$ is an unbranched $a_1$-fold covering over the following region:
\begin{equation}\label{tmV}
\tilde{\mV}_t''(\delta)=\left\{ (w_2,\dots, w_{n+1})\in\mathbb{C}^n; |w_2|\le \delta, j=2,\dots, n+1, \frac{|t|}{\prod_{i=2}^{N_{\tilde{x}}}|w_i|^{a_i}}\le \delta^{a_1} \right\}.
\end{equation}
Next we compute the integrands on $\tilde{\mU}_t(\tilde{x},\delta)$. Denote $dw'=dw_1\wedge\dots\wedge dw_{n+1}$. Then by \eqref{pullvol}, 
\begin{equation}\label{volume2}
\mu^* (v^{1/m})=g(w)\prod_{i=1}^{N_{\tilde{x}}} w_i^{a_i-b_i}\; (dw'\otimes \partial_t),
\end{equation}
For convenience,  we rewrite the corresponding factors in \eqref{volume2} using the variable $y_i=w_i^{a_i}$ for $1\le i\le N_{\tilde{x}}$:
\begin{equation}\label{wivol}
w_i^{a_i-b_i}dw_i=\frac{1}{a_i} w_i^{1-b_i}d (w_i^{a_i})=\frac{1}{a_i} y_i^{(1-b_i)/a_i}d y_i=\frac{1}{a_i} y_i^{\beta_i} d y_i.
\end{equation}
Here we denote $\beta_i=(1-b_i)/a_i$. The important inequalities for us are $\beta_i>0$ because $b_i<1$ in \eqref{logres} (by inversion of adjunction) and $\mathbb{Z}\ni a_i\ge 1$, for $i=1,\dots, N_{\tilde{x}}$. 

Using relations \eqref{y1} and \eqref{wivol}, we can find the volume form on $\tilde{\mU}_t:=\tilde{\mU}_t(\tilde{x},\delta)\subset\tilde{\mX}_t$ when $t\neq 0$:
\begin{eqnarray*}
&&\left.\prod_{i=1}^{N_{\tilde{x}}} w_i^{a_i-b_i}\; dw'\otimes\partial_t\right|_{\tilde{\mU}_t}=\left.w_1^{a_1-b_1} dw_1\wedge\prod_{i=2}^{N_{\tilde{x}}} w_i^{a_i-b_i}dw_i\wedge\bigwedge_{j=N_{\tilde{x}}+1}^{n+1}dw_j \otimes\partial_t\right|_{\tilde{\mU}_t}\\
&=&\left.\frac{1}{\prod_{i=1}^{N_{\tilde{x}}}a_i}\left(\frac{t}{\prod_{i=2}^{N_{\tilde{x}}}y_i}\right)^{\beta_1}\frac{dt}{\prod_{i=2}^{N_{\tilde{x}}}y_i}\wedge\bigwedge_{i=2}^{N_{\tilde{x}}} y_i^{\beta_i}d y_i\wedge\bigwedge_{j=N_{\tilde{x}}+1}^{n+1}dw_j\otimes\partial_t\right|_{\tilde{\mU}_t}\\
&=&\frac{t^{\beta_1}}{A}  \bigwedge_{i=2}^{N_{\tilde{x}}} y_i^{\beta_i-\beta_1-1} dy_i\wedge\bigwedge_{j=N_{\tilde{x}}+1}^{n+1} dw_j.
\end{eqnarray*}
where $A=\prod_{i=1}^{N_{\tilde{x}}}a_i$ is a positive integer. So 
\begin{equation}\label{volumecase2}
\mu^*(v\wedge\bar{v})^{1/m}= \frac{|g(w)|^2}{A^2} |t|^{2\beta_1} \bigwedge_{i=2}^{N_{\tilde{x}}} (|y_i|^{2(\beta_i-\beta_1)-2}dy_i\wedge d\bar{y}_i)\wedge\bigwedge_{j=N_{\tilde{x}}+1}^{n+1} dw_j\wedge d\bar{w}_j.
\end{equation}
Now note that, since $y_i=w_i^{a_i}\neq 0$ for $t\neq 0$, $\tilde{\mV}''_t(\delta)$ in \eqref{tmV} is an unbranched covering over the following region $\mV''_{t}(\delta)$:
\begin{eqnarray*}
\mV''_{t}(\delta)&=&\left\{(y_2,\dots, y_{N_{\tilde{x}}}, w_{N_{\tilde{x}}+1},\dots, w_{n+1}); |y_j|\le \delta^{a_j}, j=2,\dots, N_{\tilde{x}}, \right.\\
&&\hskip 2cm\left.\prod_{i=2}^{N_{\tilde{x}}}|y_i|\ge |t|/\delta^{a_1}, \mbox{ and } |w_j|\le \delta, 
j=N_{\tilde{x}}+1,\dots, n+1 \right\}\\
&=&\underline{\mathcal{V}}''_t(\delta)\times (S^1)^{N_{\tilde{x}}-1}\times \prod_{j=N_{\tilde{x}}+1}^{n+1}\{|w_j|\le \delta\}
\end{eqnarray*}
where we define:
\begin{eqnarray*}
\underline{\mathcal{V}}''_t(\delta)&=&\{(x_2,\dots, x_{N_{\tilde{x}}})\in\mathbb{R}^{n-1}; x_i\le a_i\log \delta, i=2,\dots, N_{\tilde{x}},\\
&&\hskip 4cm \mbox{ and } \sum_{j=2}^{N_{\tilde{x}}}x_j\ge \log|t|-a_1\log\delta \}.
\end{eqnarray*}
This is easily checked by changing into logarithmic polar coordinates: $y_i=e^{x_i+i\theta_i}$. Moreover, we compute that $
|y_i|^{2(\beta_i-\beta_1)-2}dy_i\wedge d\bar{y}_i=e^{2(\beta_i-\beta_1)x_i} \sqrt{-1} dx_i\wedge d\theta_i$. 
So by \eqref{volumecase2}, to estimate the integral $\int_{\tilde{\mU}_{t}(\tilde{x},\delta)}\mu^*(v\wedge\bar{v})^{1/m}$, 
we just need to estimate:
\begin{equation}\label{realintegral}
|t|^{2\beta_1}\int_{\underline{\mV}''_t(\delta)} \prod_{i=2}^{N_{\tilde{x}}}e^{2(\beta_i-\beta_1)x_i}\bigwedge_{i=2}^{N_{\tilde{x}}} dx_i.
\end{equation}
Now note that we can certainly assume $0<\delta\le 1$. So the integral region $\underline{\mV}''_t(\delta)$ is contained in $(\mathbb{R}_{<0})^{N_{\tilde{x}}-1}$ and hence in a regular simplex whose edge has length $a_1\log\delta-\log |t|$. Up to now, the index $1$ hasn't played a special role. Now without loss of generality, we can assume that $\beta_i\ge \beta_1$, so that $(\beta_i-\beta_1)x_i\le 0$ in the region of integration. Combining these facts, we see that the integral in \eqref{realintegral} is bounded by
\[
|t|^{2\beta_1}(a_1\log\delta-\log |t|)^{N_{\tilde{x}}-1}/(N_{\tilde{x}}-1)!,
\]
which goes to $0$ as $t\rightarrow 0$ because $\beta_1>0$. So we have managed to prove:
\[
\lim_{t\rightarrow 0}\int_{\tilde{\mU}_{t}(\tilde{x},\delta)}\mu^*(v\wedge\bar{v})^{1/m}=0.
\]
\begin{remark}
For comparison, note that if we calculate the case 1 using the variable $y_i=w_i^{a_i}$ with $y_0=w_0$ and $a_0=1$, then we can let $\beta_0=0$ and get:
\[
\mu^*(v\wedge \bar{v})^{1/m}=\frac{|g(w)|^2}{\prod_{i=1}^{N_{\tilde{x}}}a_i}\bigwedge_{i=1}^{N_{\tilde{x}}} (|y_i|^{2\beta_i-2}dy_i\wedge d\bar{y}_i)\wedge\bigwedge_{j=N_{\tilde{x}}+1}^{n}dw_j\wedge d\bar{w}_j.
\]
This does not have extra $t_1$ factor (compare \eqref{volumecase2}).
Then one can carry out the calculation of the limit in $y_i$-coordinates, which is equivalent to that in \eqref{case1limit}.
\end{remark}
\end{enumerate}
\begin{remark}\label{logmodcont}
In the log setting, the change we need to make is to apply the inversion of adjunction in \cite[Theorem 5.50]{KM98} for the pair $(\mX_0, \alpha \mD_0)$ to get that the pair $(\mX, \mX_0+\alpha \mD)$ is plt. We have the corresponding analytic formulas 
for the log volume form (see \eqref{logvolumeform}):
\[
dV\left((\mX_t,(1-\beta)\mD_t); h_{\Omega}e^{-\Phi}|_{\mX_t}\right)=e^{-r(\beta)\Phi}\frac{|v_t^*|^{2/m}(v_t\wedge \bar{v}_t)^{1/m}}{\left.|\mathcal{S}|_{h_\Omega}^{2(1-\beta)}\right|_{\mX_t}}.
\]
and local convergence properties remains valid essentially because $(\mX, \mX_0+(1-\beta)\mD)$ is plt.
\end{remark}


\begin{remark}\label{DKG}
One should compare Lemma \ref{IIcont} with \cite[Proposition 2.1]{Ti90} and \cite[Main Theorem]{PS00} (see also \cite{DeKo}), where similar local problems for holomorphic functions on the product  space were considered. Here the topology changes near the central fibre and we need to pull back all the calculations to a log-resolution. 

On the other hand, one referee has pointed out that the general Lemma stated in Lemma \ref{contlem} can be viewed as a strengthening of one result of M.~Gross \cite[Appendix B]{RoZh} which in our set-up says that the integrals on the left hand side of \eqref{globalconvergence1} is uniformly bounded. Our proof of the Lemma is quite different from that of Gross. While he used the deep theory of mixed Hodge structures, our proof uses elementary local computations that however rely on one important result in birational geometry: inversion of adjunction. Note that in the same setup, ``inversion of adjunction"  was first used by Berman \cite{Berm2} to show that ${\rm II}(t)$ has no logarithmic pole at $t=0$. In his paper (see \cite[Proof of Theorem 2.8]{Berm2}), Berman also speculated that ${\rm II}(t)$ should be continuous at $t=0$. So Lemma \ref{IIcont} is a confirmation of his speculation.

Because computations in the proof of Lemma \ref{IIcont} are carried out locally, the proof actually works without the global relative Fano condition. In particular, one should be able to show the following result:

\textit{
Let $\pi: \mX\rightarrow B_1(0)$ be a family of projective varieties over the unit-disc such that the general fiber is smooth and the central fiber has log terminal singularities. Let $(\mathcal{L}, h)$ be a holomorphic line bundle over $\mX$ which is semi-positive on $\mX$ and $h$ is a continuous metric on $\mL$ with positive curvature current: $-\sddbar\log h\ge 0$. Assume further that $K_{\mX/B_1(0)}+\mL$ is $\pi$-free (\cite[Definition 3.22]{KM98}). Then the relative Bergman kernel metric induced by $h$ on 
$K_{\mX/B_1(0)}+\mL$ is a continuous metric.
}
\end{remark}

\section{Examples}

\begin{enumerate}
\item (Smooth examples)
Mukai-Umemura Fano 3-fold $X_0$ \cite{MU} is a smooth compactification of $SL/\Gamma$ where $\Gamma$ is the binary icosahedral group. $X_0$ has a large symmetry group $SL(2,\mathbb{C})$. Using the method of Mukai, one can study smooth deformations of $X_0$. It was Tian \cite{Ti97} who first used the generic deformation $X_1$ of $X_0$ to construct a special degeneration of $X_1$ to $X_0$. Tian's K-stability then proves that $X_1$ does not have a K\"{a}hler-Einstein metric although there is no holomorphic vector field on $X_1$. Donaldson \cite{Do08} proved the existence of K\"{a}hler-Einstein metric on $X_0$ using Tian's $\alpha$-invariant. So $X_1$ is a K-semistable but not K-polystable Fano manifold. Because $X_0$ is smooth, this was already pointed out by Chen \cite{Chen}. There are other smooth examples of this kind in the recent work of S\"{u}\ss \ \cite{Sus13}.

\item (A singular logarithmic example revisited)
Here we revisit a class of examples from our previous work in \cite[Section 3.3]{LiSu}. Assume that $X$ is a Fano manifold and $D$ is a smooth divisor such that $D\sim -\lambda K_X$ with $0<\lambda\in \mathbb{Q}$. By the adjunction formula $-K_{D}=(1-\lambda) (-K_X)|_{D}$. From now on,  we assume $\lambda<1$. So $-K_D$ is ample and $D$ is again a Fano manifold. There is a construction of a special degeneration of the pair $(X,\alpha D)$ (for any $\alpha\in [0,1)$) via the deformation to normal cone. For this first let $\mathcal{Y}=Bl_{D\times\{0\}}(X\times\mathbb{C})$ be the blow-up of the product complex manifold $X\times\mathbb{C}$ along the smooth complex submanifold $D\times \{0\}$. The central fibre $\mathcal{Y}_0$ is the union of two components $X\cup E$ where the $X$ component is the strict transform of $X\times\{0\}$ which is unchanged because $D\times\{0\}$ is of codimension one in $X\times\{0\}$. $E$ denotes the exceptional divisor, which in this case is nothing but $\mathbb{P}(N_D\oplus\mathbb{C})$ where $N_D$ is the normal bundle of $D\subset X$. We also denote by $\mathcal{D}$ the strict transform of $D\times \mathbb{C}$ in $\mathcal{Y}$. It's easy to see that $\mathcal{D}\cong D\times\mathbb{C}$.

We have a line bundle $\mathcal{L}_c=\pi_1^*(-K_X)-c E$ on $\mathcal{Y}$. It's easy to see that $\mathcal{L}_c$ is relatively ample on $\mathcal{Y}$ (over $\mathbb{C}$) if and only if $c\in (0,\lambda^{-1})$ (See \cite[Lemma 3.13]{LiSu}). Moreover, $\mathcal{L}_{\lambda^{-1}}$ is semi-ample over $\mathcal{Y}$ and the linear system $|-m\mathcal{L}_{\lambda^{-1}}|$ for sufficiently large $m$ gives a map $\tau: \mathcal{Y}\rightarrow \mX$ by contracting the component $X$ in the central fibre, and we have $\mathcal{L}_{\lambda^{-1}}^{\otimes m}\sim_{\mathbb{C}}\tau^*(-K_{\mX/\mathbb{C}}^{\otimes m})$. In this way $(\mX, \alpha\mD, -K_{\mX})$ becomes a special degeneration of the polarized pair $(X,\alpha D,-K_X)$. The central fibre $\mX_0$ is obtained from $E=\mathbb{P}(N_D\oplus\mathbb{C})$ by contracting the infinity section $D_\infty$ of the $\mathbb{P}^1$-bundle and hence has an isolated singularity. On the other hand $\mD_0$ is the zero section $D_0$ of $E$ which does not change under $\tau$. Because $\tau|_E: E\rightarrow \mX_0$ is a resolution of singularity, we can write
$K_{E}=(\tau|_E)^*K_{\mX_0}+a(\mX_0, D_\infty) D_\infty$.
Using adjunction formula, we can get the discrepancy $a(D_\infty, \mX_0)=(1-2\lambda)/\lambda>-1$ when $0<\lambda<1$. Note that $\mX_0$ is smooth along $\mD_0$. So the pair $(\mX_0, \alpha \mD_0)$ is klt if and only if $\alpha\in [0,1)$.

\begin{lem}\label{rotationsym}
Assume that $D$ admits a smooth K\"{a}hler-Einstein metric $\omega^{D}_{KE}$. Then there exists a rotationally symmetric conical K\"{a}hler-Einstein metric on the pair $(\mX_0, \left(1-\beta\right)\mD_0)$ with $\beta=\frac{\lambda^{-1}-1}{n}$.
\end{lem}
Assuming this lemma, by our Theorem \ref{bdDing} and Theorem \ref{main}, we get a corollary:
\begin{cor}
Under the same assumption as in the above Lemma with $\beta=\frac{\lambda^{-1}-1}{n}$, the log-Ding-energy of $(X, (1-\beta) D)$ is bounded from below. Hence $(X, (1-\beta) D, -K_X)$ is log-K-semistable but not log-K-polystable .
\end{cor}
In \cite{LiSu}, we considered a special case when $X=\mathbb{P}^2$ and $D=\{Z_0^2+Z_1^2+Z_2^2=0\}$ so that $\lambda=\frac{2}{3}$ and $\beta=\frac{1}{4}$. In this case, $(\mX_0, \mD_0)\cong (\mathbb{P}^2(1,1,4), \{Z_2=0\})$ and the conical K\"{a}hler-Einstein metric is nothing but the standard orbifold metric on $\mathbb{P}^2(1,1,4)$ coming from the branched covering $\mathbb{P}^2\rightarrow \mathbb{P}^2(1,1,4)$. Here we observe that this is just a special example of the above general set-up.
\begin{proof}[Proof of Lemma \ref{rotationsym}]
The construction is similar to the construction of rotationally symmetric K\"{a}hler-Ricci solitons in the author's thesis \cite{Li12} which was a generalization of earlier constructions by Koiso, Cao, and also Feldman-Ilmann-Knopf. First, for later convenience, we rescale the K\"{a}hler-Einstein metric $\omega^{D}_{KE}$ on $D$ to be contained in the class $2\pi c_1(-K_X)|_D=2\pi (1-\lambda)^{-1} c_1(-K_D)$. Then we can choose a Hermitian metric $h$ on $N_D\rightarrow D$ such that
$-\sddbar\log h=\lambda \omega_{KE}^D$ because $N_D=-\lambda K_X|_D$. We will view $h$ as a positive function denoted by $r$ on the total space of the line bundle $\pi: N_D\rightarrow D$. Then we construct the Calabi ansatz $
\omega=\pi^*\omega^{D}_{KE}+\sddbar P(r)$. To calculate it, we choose a local trivialization of $N_D$ so that $h=a(z)|\xi|^2$ where $z=\{z_1,\dots, z_{n-1}\}$ is a coordinate chart on $D$ and $\xi$ is the holomorphic coordinate along the fibre. 
By a straight forward calculation we get:
\begin{equation}\label{calans}
\omega=(1-\lambda P_r r)\omega^{D}_{KE}+(P_rr)_rr\frac{\nabla \xi\wedge \overline{\nabla \xi}}{|\xi|^2}=(1-\lambda P_s)\omega^{D}_{KE}+P_{ss}\frac{\nabla \xi\wedge \overline{\nabla\xi}}{|\xi|^2}.
\end{equation}
We have introduced $s=\log r\in (-\infty, +\infty)$ and denoted the horizontal cotangent differential by:
\[
\nabla \xi=d\xi+\xi a^{-1}\partial a=\xi\cdot  \partial\log h.
\]
From \eqref{calans}, we see that the necessary condition for $\omega$ to be positive definite is:
\begin{equation}\label{posdef}
P_s\in [0, \lambda^{-1}), \mbox{ and } P_{ss}>0.
\end{equation}
In particular, $P$ is a convex function and $P_s$ is increasing on $(-\infty, +\infty)$.
From \eqref{calans}, we can also calculate the volume form:
\begin{equation}\label{rotvol}
\omega^n=n(1-\lambda P_s)^{n-1}P_{ss}(\omega^{D}_{KE})^{n-1}\wedge\frac{d\xi\wedge d\bar{\xi}}{|\xi|^2}.
\end{equation}
Suppose that we prescribe the angle $\beta$ along the zero section $D_0$ of $N_D\rightarrow D$. Then we would like to solve the equation:
\begin{equation}\label{rotcKE}
Ric(\omega)=\mu_\beta\omega+(1-\beta)\{D_0\}, \quad \mbox{ on } N_D.
\end{equation}
By taking the cohomology class and restricting to $D_0$ we can determine 
\begin{equation}\label{mubeta}
\mu_\beta=1-\lambda(1-\beta)=1-\lambda+\lambda\beta.
\end{equation}
Under local trivialization, (by Lelong-Poincar\'{e} formula) the right-hand-side is equal to 
\[
\mu_\beta\omega+(1-\beta_1)\{D_0\}=\sddbar(\mu_\beta(-\lambda^{-1}\log a+P(s))+(1-\beta)\log|\xi|^2).
\]
Using this and \eqref{rotvol}, we can reduce the equation \eqref{rotcKE} to the following equation:
\begin{eqnarray*}
&&-\sddbar\left((n-1)\log(1-\lambda P_s)+\log P_{ss}\right)+Ric(\omega^{D}_{KE})+\sddbar\log|\xi|^2\\
&&\hskip 4cm=\sddbar(\mu_\beta(-\lambda^{-1}\log a+P(s))+(1-\beta)\log |\xi|^2).
\end{eqnarray*}
Now, according to our normalization, we have $\scriptstyle Ric(\omega^D_{KE})=(1-\lambda)\omega_{KE}^D=(1-\lambda)(-\lambda^{-1}\sddbar\log a)$.
Substituting this into above equation, we can reduce \eqref{rotcKE} to the following ordinary differential equation:
\begin{equation}
(n-1)\log(1-\lambda P_s)+\log P_{ss}=\beta s-\mu_\beta P+constant.
\end{equation}
Taking the derivative with respect to $s$, we get, using \eqref{mubeta}
\begin{equation}\label{rotode}
(n-1)\frac{-\lambda P_{ss}}{1-\lambda P_s}+\frac{P_{sss}}{P_{ss}}=\beta-\mu_\beta P_s=(1-\lambda^{-1})+\mu_\beta\lambda^{-1}(1-\lambda P_s).
\end{equation}
Introduce a new variable $\phi=P_s$. Since $\phi_s=P_{ss}>0$ by \eqref{posdef}, we can write $s=s(\phi)$ and define $F(\phi)=\phi_s(s(\phi))$ so that
$F'(\phi)=\phi_{ss}s_\phi=P_{sss}/P_{ss}$. So we reduce \eqref{rotode} to 
\begin{equation}
(n-1)\frac{-\lambda F(\phi)}{1-\lambda\phi}+F'(\phi)=-(\lambda^{-1}-1)+\mu_\beta(1-\lambda\phi).
\end{equation}
Now multiplying the integrating factor $(1-\lambda\phi)^{n-1}$ we can solve the equation:
\begin{equation}\label{relateFut}
(1-\lambda\phi)^{n-1}F(\phi)=\frac{\lambda^{-1}-1}{n\lambda}((1-\lambda\phi)^{n}-1)-\frac{\mu_\beta\lambda^{-1}}{(n+1)\lambda}((1-\lambda\phi)^{n+1}-1).
\end{equation}
Now we finally bring $\mX_0$ into picture. To get isolated singularity at infinity, we need $\lim_{s\rightarrow+\infty}(1-\lambda\phi(s))=0$. So we get $\lim_{s\rightarrow+\infty}\phi(s)=\lambda^{-1}$.
So by taking limits on both sides of above identity, we get:
\[
0=\frac{\lambda^{-1}-1}{n}-\frac{\mu_\beta\lambda^{-1}}{n+1}\Longrightarrow \beta=\frac{\lambda^{-1}-1}{n}.
\]
For this $\beta$ we can get:
\[
\phi_s=F(\phi)=\frac{\lambda^{-1}-1}{n\lambda}((1-\lambda\phi)-(1-\lambda\phi)^2)=\frac{\beta}{\lambda}((1-\lambda\phi)-(1-\lambda\phi)^2).
\]
So we can find the explicit metric and potential for the conical K\"{a}hler-Einstein metric:
\[
P_s=\phi(s)=\frac{1}{\lambda}\frac{1}{1+Ce^{-\beta s}}\Longrightarrow P(s)=\frac{1}{\lambda\beta}\log(1+C^{-1}e^{\beta s}).
\]
The positive constant $C$ clearly represents the transformation of the conical K\"{a}hler-Einstein metric by the $\mathbb{C}^*$-action on $(\mX_0, \mD_0)$.
\end{proof}
\begin{remark}
The identity \eqref{relateFut} is closely related to the calculation of log-Futaki-invariant in the proof of \cite[Proposition 13]{LiSu}.
\end{remark}
\begin{remark}
If we assume that $X$ itself also admits a smooth K\"{a}hler-Einstein metric, then by the interpolation argument in \cite{LiSu}, $(X,(1-t)D)$ admits a conical K\"{a}hler-Einstein metric if and only if $t\in \left(\beta=\frac{\lambda^{-1}-1}{n}, 1\right]$. On the other hand, H.J. Hein first suggested to the author and S.Sun that one should always be able to glue Tian-Yau's Calabi-Yau metric to the above singular K\"{a}hler-Einstein metric and perturb the angle to get the conical K\"{a}hler-Einstein metric on $(X, (1-\gamma)D)$ for $\gamma$ slightly bigger than $\beta$.  We plan to study this gluing problem in future together with H.J.Hein and S.Sun.
\end{remark}

\end{enumerate}

\noindent
{\bf Acklowledgement:}
The author would like to thank Professor Gang Tian for constant encouragement.  The author would like to thank Dr. Song Sun for stimulating discussions during the joint work in \cite{LiSu}, and Professor Yanir Rubinstein for bringing the references of \cite{LH} and \cite{Rub} to his attention.  The author is especially grateful to Professor Robert Berman for kindly pointing out that the argument for the continuity of $\rm II$ was missing in the first version and for very helpful discussions. The author would also like to thank the referees for carefully reading the manuscript and constructive suggestions on the improvement of presentation. In particular, one referee pointed out the relation of Lemma \ref{contlem} to the result of M.~Gross.

\noindent
Mathematics Department, Stony Brook University, Stony Brook NY, 11794-3651, USA \\
Email: chi.li@stonybrook.edu

\end{document}